\documentclass{article}
%
%
\usepackage{amssymb,amsmath,bm,geometry,graphics,graphicx,url,color}
\usepackage{amsfonts}
\usepackage{graphicx}
\usepackage{multirow}
\usepackage{amscd}
\usepackage{enumerate}
\usepackage{mathrsfs}
\usepackage{xypic}
\usepackage{stmaryrd}
\usepackage{fancybox}
\usepackage{exscale}
\usepackage{enumitem,array}%
\usepackage{graphicx}


\def\pdt2{\partial_t^2}
\def\pdx2{\partial_x^2}

\newcommand{\normmm}[1]{{\left\vert\kern-0.25ex\left\vert\kern-0.25ex\left\vert #1
    \right\vert\kern-0.25ex\right\vert\kern-0.25ex\right\vert}}

\newcommand{\abs}[1]{\left\vert#1\right\vert}

\def\RR{{\mathbb{R}}}

\newtheorem{mytheo}{Theorem}[section]

\newtheorem{mydef}[mytheo]{Definition}
\newtheorem{cor}[mytheo]{Corollary}
\newtheorem{lem}[mytheo]{Lemma}

\newtheorem{rem}[mytheo]{Remark}

\def\no{\noindent}

\title{Efficient energy-preserving methods for charged-particle dynamics}

\author{Ting Li\,\footnote{School of Mathematical Sciences, Qufu Normal
University, Qufu  273165,  P.R.China. E-mail:~{\tt
1009587520@qq.com}} \and Bin Wang\thanks{School of Mathematical
Sciences, Qufu Normal University, Qufu  273165, P.R.China;
Mathematisches Institut, University of T\"{u}bingen, Auf der
Morgenstelle 10, 72076 T\"{u}bingen, Germany. E-mail:~{\tt
wang@na.uni-tuebingen.de}} }

\begin{document}
\maketitle
\begin{abstract}
In this paper, energy-preserving methods are formulated and studied
for solving  charged-particle dynamics. We first formulate the
scheme of energy-preserving methods and analyze its basic properties
including  algebraic order and symmetry. Then it is shown that these
novel methods can exactly preserve the energy of charged-particle
dynamics. Moreover, the long time momentum conservation is studied
along such energy-preserving methods. A numerical experiment is
carried out to illustrate the notable superiority of the new methods
in comparison with the popular Boris method in the literature.
\medskip

\no{Keywords:}   Charged particle dynamics;  Energy-preserving
 methods; Long-time conservtion

\medskip
\no{MSC (2000):} 65P10,  65L05
\end{abstract}

\section{Introduction}
A large amount of work in the literature has been devoted to
studying the  following charged-particle dynamics (see, e.g.
\cite{Arnold97,Boris,Cary2009,Ellison2015,
Hairer2017,Lubich2017,He2015,Qin2013,Tao2016, Zhang2016})
\begin{equation}\label{charged-particle}
\ddot{x}=\dot{x} \times  B(x) +F(x),\ \ \ \ \ x(t_{0})=x^{0},\ \
\dot{x}(t_{0})=\dot{x}^{0}
\end{equation}
where  $x(t)\in \RR^3$ represents the  position of a particle moving
in an electro-magnetic field, $B(x)$ is a magnetic field
 which is defined as $B(x)=\nabla_{x} \times A(x)$ with
the vector potential $ A(x) \in \RR^3 $, and $F(x)$ is the negative
gradient of the scalar potential $U(x)$. We define $v=\dot{x}$ and
then the energy of the dynamics is given by
  \begin{equation}\label{E(x,v)}
E(x,v)=\frac{1}{2}\abs{v}^2+U(x).
\end{equation}
It is well known that the solution of this system conserves the
energy  exactly, i.e.
$$E(x(t),v(t))\equiv E(x^0,\dot{x}^0)\ \ \ \textmd{for\ any } t. $$
It has been shown in \cite{Hairer2017} that if   the $ U(x) $ and $
A(x) $ have the following properties
\begin{equation}\label{M}
U(e^{\tau S}x)=U(x)  \ \ \textmd{and }\ \ e^{-\tau S}A(e^{\tau
S}x)=A(x)\ \ \ \ \ \textmd{for\ \  all\ \  real}\ \  \tau,
\end{equation}
where $ S $ is a skew-symmetric matrix, then the momentum
\begin{equation}
M(x,v)=(v+A(x))^{\intercal}Sx
\end{equation}
is conserved  along  the solution of the differential equation
\eqref{charged-particle}. This point can be proved by multiplying
\eqref{charged-particle} with $ Sx $ and the reader is referred to
\cite{Hairer2017} for details. It has also been noted in
\cite{Hairer2017}  that since the matrix $ S $ is skew-symmetric, we
have $ x^{\intercal}S\ddot{x}=-\frac{d}{dt}(x^{\intercal}S\dot{x}) $
and it follows from these properties that $ S\nabla U(x)=0 $ and $
x^{\intercal}S(\dot{x}\times
B(x))=-\frac{d}{dt}(x^{\intercal}SA(x)).$

In this paper, we denote the vector $B(x)$
 by $B(x)=(B_{1}(x),B_{2}(x),B_{3}(x))^{\intercal}$, where $B_{i}(x) \in \RR$ for $i=1,2,3.$  By the definition of the cross
 product, we obtain $ \dot{x} \times  B(x) =  \tilde{B}(x) \dot{x},$ where the skew-symmetric
matrix $\tilde{B}(x)$ is given by
$$\tilde{B}(x)=\left(
                   \begin{array}{ccc}
                     0 & B_{3}(x) & -B_{2}(x) \\
                     -B_{3}(x) & 0 & B_{1}(x) \\
                     B_{2}(x) & -B_{1}(x) & 0 \\
                   \end{array}
                 \right).
$$

In order to solve the charged-particle dynamics  effectively, many
kinds of useful methods have been studied and developed. Boris
method \cite{Boris} is a popular integrator and it was researched
further in \cite{Ellison2015, Hairer2017, Qin2013}. There are many
other kinds of methods which have been researched for solving
charged-particle dynamics, such as volume-preserving algorithms in
\cite{He2015}, symmetric multistep methods in \cite{Lubich2017}
 and symplectic or K-symplectic integrators in \cite{Tao2016, Zhang2016}.
Recently, the authors in \cite{wang2018-new2} proposed adapted
exponential integrators for solving charged-particle dynamics  and
analyzed its symplecticity.

On the other hand, energy-preserving methods are an important and
efficient kind of  methods which have been received much attention
in the past few years. The authors in \cite{E.Faou2004} constructed
energy-preserving $B$-series methods. The Average Vector Field (AVF)
method was presented in \cite{Celloni2014,Chen2001} and it was shown
in \cite{Quispel2008} that AVF method is also a $B$-series method.
In \cite{wang2012-1}, the authors proposed a new trigonometric
energy-preserving method. Various different kinds of
energy-preserving methods are proposed and analyzed, such as
discrete gradient methods (see, e.g. \cite{McLachlan2014, wang}),
the energy-preserving exponentially-fitted methods(see, e.g.
\cite{Miyatake2014, Miyatake2015}), time finite elements methods
(see, e.g. \cite{Betsch2000, Wu2016}), the Runge-Kutta-type
energy-preserving collocation methods(see, e.g. \cite{Cohen2011, E.
Hairer}) and Hamiltonian Boundary Value Methods ( see, e.g. \cite{L.
Brugnano}). We  refer to \cite{W2016, McLachlan1999,
Quispel2008,wang-2016,wu2017-JCAM, Wu2018} for more research work on
energy-preserving methods. However, it seems that energy-preserving
methods for solving charged-particle dynamics have not been
considered in the literature, which motivates this paper.

Based on these work, we  will formulate and research a novel
energy-preserving method for solving charged-particle dynamics
\eqref{charged-particle}. The rest of this paper is organized as
follows. In Section \ref{sec2}, we   present the scheme of the
method and analyze its algebraic order and symmetry. In Section
\ref{sec3}, it is shown  that the novel method can exactly preserve
the energy \eqref{E(x,v)} of charged-particle dynamics. The long
time near conservation of the momentum for this new method is
discussed in Section \ref{sec4}. Section \ref{sec6} reports a
numerical experiment to show the efficiency of the novel method.
Section \ref{sec7} is devoted to the conclusions of this paper.

\section{The scheme of the method and its basic properties}\label{sec2}
\subsection{Formulation of the method}

In order to drive effective methods for the system
\eqref{charged-particle}, we first present its exact solution by the
 variation-of-constants formula.
\begin{mytheo}
(See \cite{hairer2006}.) The exact solution of
\eqref{charged-particle} can be expressed as
\begin{equation}\label{exact}
\begin{array}
[c]{ll}
&x(t_{n}+h)=x(t_{n})+hv(t_{n})+h^{2}\int_{0}^1 (1-z)\hat{f}(t_{n}+hz)dz,\\
&v(t_{n}+h)=v(t_{n})+h \int_{0}^1 \hat{f}(t_{n}+hz)dz,
\end{array}
\end{equation}
where $ \hat{f}(t):=\tilde{B}(x(t))v(t) +F(x(t)) $.
\end{mytheo}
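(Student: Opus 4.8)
The plan is to integrate the second-order equation \eqref{charged-particle} twice over the interval $[t_{n},t_{n}+h]$, treating the right-hand side as a known function of time once the exact solution is inserted into it. Writing $v=\dot x$ and $\hat f(t)=\tilde B(x(t))v(t)+F(x(t))$, equation \eqref{charged-particle} becomes $\dot v(t)=\hat f(t)$, so the velocity part follows at once from the fundamental theorem of calculus: $v(t_{n}+h)=v(t_{n})+\int_{t_{n}}^{t_{n}+h}\hat f(s)\,ds$. The affine change of variables $s=t_{n}+hz$, $ds=h\,dz$, then turns this into $v(t_{n}+h)=v(t_{n})+h\int_{0}^{1}\hat f(t_{n}+hz)\,dz$, which is the second identity in \eqref{exact}.

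For the position I would use Taylor's formula with integral remainder (equivalently, integrate $\dot x(t)=v(t)$ and use the previous line), which gives $x(t_{n}+h)=x(t_{n})+h\dot x(t_{n})+\int_{t_{n}}^{t_{n}+h}(t_{n}+h-s)\ddot x(s)\,ds$. Since $\ddot x(s)=\hat f(s)$ and $\dot x(t_{n})=v(t_{n})$, this reads $x(t_{n}+h)=x(t_{n})+hv(t_{n})+\int_{t_{n}}^{t_{n}+h}(t_{n}+h-s)\hat f(s)\,ds$. Applying the same substitution $s=t_{n}+hz$, under which $t_{n}+h-s=h(1-z)$, yields $\int_{t_{n}}^{t_{n}+h}(t_{n}+h-s)\hat f(s)\,ds=h^{2}\int_{0}^{1}(1-z)\hat f(t_{n}+hz)\,dz$, which is the first identity in \eqref{exact}.

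If one prefers not to invoke Taylor's formula directly, the integral-remainder expression can instead be obtained by writing $x(t_{n}+h)-x(t_{n})=\int_{t_{n}}^{t_{n}+h}v(s)\,ds$, substituting $v(s)=v(t_{n})+\int_{t_{n}}^{s}\hat f(\tau)\,d\tau$, and exchanging the order of integration in the resulting double integral: for fixed $\tau\in[t_{n},t_{n}+h]$ the variable $s$ ranges over $[\tau,t_{n}+h]$, contributing the weight $t_{n}+h-\tau$. There is no genuine difficulty in this argument; the only point that needs a little care is the bookkeeping of the affine substitution and the order-of-integration swap, so that the weights $h\,\hat f(t_{n}+hz)$ and $h^{2}(1-z)\hat f(t_{n}+hz)$ emerge with the correct powers of $h$.
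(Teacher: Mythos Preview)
Your argument is correct: integrating $\dot v=\hat f$ once and applying Taylor's formula with integral remainder (or, equivalently, Fubini on the double integral) gives exactly the two identities in \eqref{exact}, and the substitution $s=t_{n}+hz$ is handled correctly. The paper itself does not supply a proof of this theorem; it simply records the variation-of-constants formula and refers the reader to \cite{hairer2006}. So there is nothing to compare against beyond noting that your proof is the standard self-contained derivation that the cited reference would give.
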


Based on the  variation-of-constants formula, we define  the
following method for the charged-particle dynamics
\eqref{charged-particle}.

\begin{mydef}
The energy-preserving method  for solving charged-particle dynamics
\eqref{charged-particle} is defined as
\begin{equation}\label{EP}
\begin{array}[c]{ll}
&x_{n+1}=x_{n}+hv_{n}+\frac{h^2}{2}\int_{0}^1
F(x_{n}+\tau(x_{n+1}-x_{n}))
d\tau+\frac{h^2}{2}\tilde{B}(\frac{x_{n+1}+x_{n}}{2})\frac{v_{n+1}+v_{n}}{2}, \\
&v_{n+1}=v_{n}+h\int_{0}^1 F(x_{n}+\tau(x_{n+1}-x_{n}))
d\tau+h\tilde{B}(\frac{x_{n+1}+x_{n}}{2})\frac{v_{n+1}+v_{n}}{2},
\end{array}
\end{equation}
where $h$ is a stepsize. We denote this method by EP.
\end{mydef}

\subsection{Algebraic order}\label{sec5}
\begin{mytheo}
Under the local assumptions $x(t_{n})=x_{n}$ and $v(t_{n})=v_{n}$,
the energy-preserving method \eqref{EP} is of order two, i.e.,
$$ x(t_{n+1})-x_{n+1}=\mathcal{O}(h^{3}),\ \ \ \ \ \ \ v(t_{n+1})-v_{n+1}=\mathcal{O}(h^{3}).$$
\end{mytheo}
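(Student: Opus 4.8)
The plan is to compare the numerical scheme \eqref{EP} with the exact variation-of-constants formula \eqref{exact} term by term, using Taylor expansion around $t_n$ under the local assumptions $x(t_n)=x_n$, $v(t_n)=v_n$. First I would establish the rough a priori estimate $x_{n+1}-x_n=\mathcal{O}(h)$ directly from the first equation of \eqref{EP} (the right-hand side is $h v_n$ plus terms that are $\mathcal{O}(h^2)$ once one knows $v_{n+1}+v_n$ stays bounded, which follows from the second equation by a short fixed-point/Neumann-series argument for $h$ small). This crude bound is what licenses all subsequent expansions, in particular $\frac{x_{n+1}+x_n}{2}=x_n+\mathcal{O}(h)$ and $\frac{v_{n+1}+v_n}{2}=v_n+\mathcal{O}(h)$.

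Next I would expand each ingredient of \eqref{EP} to the order needed. For the electric part, $\int_0^1 F(x_n+\tau(x_{n+1}-x_n))\,d\tau = F(x_n)+\mathcal{O}(h)$ since $x_{n+1}-x_n=\mathcal{O}(h)$; for the magnetic part, $\tilde B\!\left(\frac{x_{n+1}+x_n}{2}\right)\frac{v_{n+1}+v_n}{2}=\tilde B(x_n)v_n+\mathcal{O}(h)$. Substituting these into the velocity update gives $v_{n+1}=v_n+h\big(\tilde B(x_n)v_n+F(x_n)\big)+\mathcal{O}(h^2)=v_n+h\,\hat f(t_n)+\mathcal{O}(h^2)$, and into the position update gives $x_{n+1}=x_n+hv_n+\frac{h^2}{2}\hat f(t_n)+\mathcal{O}(h^3)$. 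On the exact-solution side, Taylor-expanding the integrals in \eqref{exact}: $h^2\int_0^1(1-z)\hat f(t_n+hz)\,dz=\frac{h^2}{2}\hat f(t_n)+\mathcal{O}(h^3)$ and $h\int_0^1\hat f(t_n+hz)\,dz=h\,\hat f(t_n)+\frac{h^2}{2}\dot{\hat f}(t_n)+\mathcal{O}(h^3)$, so $x(t_{n+1})=x_n+hv_n+\frac{h^2}{2}\hat f(t_n)+\mathcal{O}(h^3)$ and $v(t_{n+1})=v_n+h\,\hat f(t_n)+\mathcal{O}(h^2)$. Comparing, the position error is immediately $\mathcal{O}(h^3)$. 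For the velocity I need to recover one more order: the $\mathcal{O}(h^2)$ discrepancy in $v_{n+1}-v(t_{n+1})$ must be pinned down. Here I would expand more carefully, using $\frac{x_{n+1}+x_n}{2}-x(t_n+h/2)=\mathcal{O}(h^2)$ and $\frac{v_{n+1}+v_n}{2}-v(t_n+h/2)=\mathcal{O}(h^2)$ together with the midpoint-rule identity $h\int_0^1\hat f(t_n+hz)\,dz = h\,\hat f(t_n+h/2)+\mathcal{O}(h^3)$ and the analogous fact that $\frac{h^2}{2}\hat f$-type terms match the exact $h^2\int_0^1(1-z)\hat f\,dz$ up to the $\dot{\hat f}$ term. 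The key observation is that the scheme's velocity increment equals $h$ times $\tilde B$ and $F$ evaluated at the midpoint configuration $\big(\tfrac{x_{n+1}+x_n}{2},\tfrac{v_{n+1}+v_n}{2}\big)$, which agrees with $h\,\hat f(t_n+h/2)$ up to $\mathcal{O}(h^3)$ because both arguments agree with $x(t_n+h/2),v(t_n+h/2)$ up to $\mathcal{O}(h^2)$ and $\hat f$ is Lipschitz with bounded derivative. The electric midpoint term, however, is $\int_0^1 F(x_n+\tau(x_{n+1}-x_n))\,d\tau$ rather than $F$ at the midpoint; since $\int_0^1 F(x_n+\tau\Delta)\,d\tau = F(x_n+\tfrac12\Delta)+\mathcal{O}(\Delta^2)$ with $\Delta=x_{n+1}-x_n=\mathcal{O}(h)$, this too matches up to $\mathcal{O}(h^2)$ inside the $h$-prefactor, hence $\mathcal{O}(h^3)$ overall.

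The main obstacle I anticipate is the bookkeeping for the velocity equation: because the scheme is implicit and the magnetic/electric terms are evaluated at averaged arguments that themselves depend on the unknowns, one must be careful that the $\mathcal{O}(h^2)$ errors in those arguments, when multiplied by the $h$ prefactor, only contribute at $\mathcal{O}(h^3)$ — and that no spurious $\mathcal{O}(h^2)$ term survives in $v_{n+1}-v(t_{n+1})$. A clean way to handle this is to treat \eqref{EP} as a perturbation of the implicit midpoint rule applied to the first-order system $\dot x=v$, $\dot v=\tilde B(x)v+F(x)$ (with the electric term replaced by its AVF form, which differs from the midpoint value only at $\mathcal{O}(h^2)$ in the argument) and invoke the known second-order accuracy and symmetry of midpoint/AVF-type discretizations; alternatively, just push the Taylor expansions one order further and verify the cancellation explicitly. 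Symmetry of the method (swapping $n\leftrightarrow n+1$, $h\to -h$ leaves the scheme invariant, since the averages are symmetric) gives an independent confirmation that the order is even, so order two is the sharp statement once the $\mathcal{O}(h^3)$ position and velocity errors are established.
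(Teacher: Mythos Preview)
Your proposal is correct and follows essentially the same route as the paper: subtract \eqref{EP} from \eqref{exact}, Taylor-expand the integrands and the averaged arguments using the crude bounds $x_{n+1}-x_n=\mathcal{O}(h)$, $v_{n+1}-v_n=\mathcal{O}(h)$, and read off the $\mathcal{O}(h^3)$ local error. You are in fact more careful than the paper on the velocity component: the paper dispatches it with a single ``similarly'', whereas you correctly observe that the naive expansion only gives $\mathcal{O}(h^2)$ for $v$ and that one must push to the next order (via the midpoint comparison $\tfrac{x_{n+1}+x_n}{2}=x(t_n+h/2)+\mathcal{O}(h^2)$, $\int_0^1 F(x_n+\tau\Delta)\,d\tau=F(x_n+\tfrac12\Delta)+\mathcal{O}(h^2)$, etc.) or invoke symmetry to force even order.
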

\begin{proof}
By \eqref{exact} and \eqref{EP}, we compute
\begin{equation}\label{order}
\begin{array}
[c]{lll}
&x(t_{n+1})-x_{n+1}\\
=&(x(t_{n})+hv(t_{n})+h^{2}\int_{0}^{1}(1-z)\hat{f}(t_{n}+hz)dz)\\
&-(x_{n}+hv_{n}+\frac{h^2}{2}\int_{0}^{1}F(x_{n}+\tau(x_{n+1}-x_{n}))d\tau
+\frac{h^2}{2}\tilde{B}(\frac{x_{n+1}+x_{n}}{2})\frac{v_{n+1}+v_{n}}{2})\\
=&(x(t_{n})+hv(t_{n})+h^{2}\int_{0}^{1}(1-z)\tilde{B}(x(t_{n}+hz))v(t_{n}+hz)dz+h^{2}\int_{0}^{1}(1-z)F(x(t_{n}+hz))dz)\\
&-(x_{n}+hv_{n}+\frac{h^{2}}{2}\int_{0}^{1}F(x_{n}+\tau(x_{n+1}-x_{n}))d\tau+\frac{h^{2}}{2}\tilde{B}(\frac{x_{n+1}+x_{n}}{2})\frac{v_{n+1}+v_{n}}{2})\\
=&h^{2}(\int_{0}^{1}(1-z)\tilde{B}(x(t_{n}))v(t_{n})dz+\mathcal{O}(h)+\int_{0}^{1}(1-z)F(x(t_{n}))dz+\mathcal{O}(h))\\
&-(\frac{h^{2}}{2}\int_{0}^{1}F(x_{n}+\tau(x_{n+1}-x_{n}))d\tau+\frac{h^{2}}{2}\tilde{B}(\frac{x_{n+1}+x_{n}}{2})\frac{v_{n+1}+v_{n}}{2})\\
=&h^{2}\int_{0}^{1}(1-z)\tilde{B}(x_{n})v_{n}dz+h^{2}\int_{0}^{1}(1-z)F(x_{n})dz-\frac{h^{2}}{2}\tilde{B}(\frac{x_{n+1}+x_{n}}{2})\frac{v_{n+1}+v_{n}}{2}\\
&-\frac{h^{2}}{2}\int_{0}^{1}F(x_{n}+\tau(x_{n+1}-x_{n}))d\tau+\mathcal{O}(h^{3})\\
=&\frac{h^{2}}{2}(\tilde{B}(x_{n})v_{n}-\tilde{B}(\frac{x_{n+1}+x_{n}}{2})\frac{v_{n+1}+v_{n}}{2})+\frac{h^{2}}{2}(F(x_{n})-\int_{0}^{1}F(x_{n}+\tau(x_{n+1}+x_{n}))d\tau)
+\mathcal{O}(h^{3}).
\end{array}
\end{equation}
In the light of  $ x_{n+1}=x_{n}+\mathcal{O}(h)$ and $
v_{n+1}=v_{n}+\mathcal{O}(h), $  it is obtained that
$$\tilde{B}(\frac{x_{n+1}+x_{n}}{2})\frac{v_{n+1}+v_{n}}{2}=\tilde{B}(x_{n})v_{n}+\mathcal{O}(h).$$
On the other hand, we have
 $$ \int_{0}^{1}F(x_{n}+\tau(x_{n+1}+x_{n}))d\tau=F(x_{n})+\mathcal{O}(h).$$
Consequently, the formula \eqref{order} becomes
$$x(t_{n+1})-x_{n+1}=\mathcal{O}(h^{3}).$$
Similarly, we obtain $v(t_{n+1})-v_{n+1}=\mathcal{O}(h^{3}).$ The
proof is complete.
\end{proof}
\subsection{Symmetry of the method}
A numerical method denoted by $y_{n+1}=\Phi_h(y_n)$ is called to be
symmetric if exchanging $y_n\leftrightarrow
 y_{n+1}$ and $h\leftrightarrow -h$ does not change the scheme of the method  (see \cite{hairer2006}).
It has been pointed out in \cite{hairer2006} that
  symmetric methods  have excellent longtime behaviour  and they play
  an important
role in  geometric numerical integration.
\begin{mytheo}
The method \eqref{EP} is symmetric.
\end{mytheo}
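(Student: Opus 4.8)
The plan is to verify the symmetry condition directly from the definition: I exchange $x_n \leftrightarrow x_{n+1}$, $v_n \leftrightarrow v_{n+1}$, and replace $h$ by $-h$ in the scheme \eqref{EP}, and then check that the resulting equations are equivalent to the original ones. First I would rewrite the two relations in \eqref{EP} in a form that makes the symmetry transparent. Observe that the quantities $\tfrac{x_{n+1}+x_n}{2}$ and $\tfrac{v_{n+1}+v_n}{2}$ are manifestly invariant under $x_n \leftrightarrow x_{n+1}$, $v_n \leftrightarrow v_{n+1}$, and so is $\tilde B\!\left(\tfrac{x_{n+1}+x_n}{2}\right)\tfrac{v_{n+1}+v_n}{2}$. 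Likewise, in the integral term I would use the substitution $\tau \mapsto 1-\tau$ to write
$$
\int_0^1 F\big(x_n + \tau(x_{n+1}-x_n)\big)\,d\tau = \int_0^1 F\big(x_{n+1} + \tau(x_n-x_{n+1})\big)\,d\tau,
$$
which shows that this averaged-force term is also symmetric in $x_n$ and $x_{n+1}$.

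Next I would carry out the substitution on the velocity equation. Starting from $v_{n+1} = v_n + h\,G + h\,H$, where $G := \int_0^1 F(x_n+\tau(x_{n+1}-x_n))\,d\tau$ and $H := \tilde B\!\left(\tfrac{x_{n+1}+x_n}{2}\right)\tfrac{v_{n+1}+v_n}{2}$, the transformation sends this to $v_n = v_{n+1} + (-h)\,G' + (-h)\,H'$, where $G'$ and $H'$ are the images of $G$ and $H$ under the swap. By the invariance observations above, $G' = G$ and $H' = H$, so the transformed equation reads $v_n = v_{n+1} - hG - hH$, i.e. $v_{n+1} = v_n + hG + hH$, which is exactly the original velocity equation. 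The same computation applies to the position equation: the term $hv_n$ becomes $(-h)v_{n+1}$, and the two $\tfrac{h^2}{2}(\cdots)$ terms are unchanged because $(-h)^2 = h^2$ and the bracketed expressions are swap-invariant, so $x_{n+1} = x_n + (-h)v_{n+1} \cdot(-1) + \cdots$ — more precisely, the transformed position equation $x_n = x_{n+1} - hv_{n+1} + \tfrac{h^2}{2}G + \tfrac{h^2}{2}H$ must be shown equivalent to the original. To close this last step I would add the transformed position equation to $h$ times the transformed velocity equation (or subtract appropriately), eliminating the mixed terms and recovering \eqref{EP}; this is the one place where a small algebraic manipulation rather than a pure invariance argument is needed.

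The main obstacle, such as it is, is bookkeeping rather than depth: one must be careful that the position equation is genuinely reproduced, since naively swapping leaves an equation relating $x_n$ to $x_{n+1}$ that looks different from the original until one uses the velocity relation to rearrange it. Concretely, the transformed pair is
$$
x_n = x_{n+1} - h v_{n+1} + \tfrac{h^2}{2}G + \tfrac{h^2}{2}H, \qquad v_n = v_{n+1} - hG - hH,
$$
and substituting the second into the first (replacing $-hv_{n+1}$ appropriately, or equivalently checking $x_{n+1} - x_n = hv_n + \tfrac{h^2}{2}G + \tfrac{h^2}{2}H$ follows) yields exactly \eqref{EP}. Since every ingredient — the midpoint arguments, the averaged force via $\tau \mapsto 1-\tau$ — is symmetric, the verification goes through and the method is symmetric.
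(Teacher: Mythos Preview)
Your proposal is correct and follows essentially the same approach as the paper: exchange $x_n\leftrightarrow x_{n+1}$, $v_n\leftrightarrow v_{n+1}$, $h\leftrightarrow -h$, note that the midpoint and averaged-force terms are swap-invariant (via $\tau\mapsto 1-\tau$), and then use the transformed velocity equation to rewrite the transformed position equation back into its original form. The paper's proof does exactly this, though it leaves the substitution step from its equation (5) to (6) implicit, whereas you spell out that this is where the velocity relation must be used.
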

\begin{proof}
Exchanging $x_{n}\leftrightarrow x_{n+1} , v_{n} \leftrightarrow
v_{n+1} $ and $h\leftrightarrow -h$ in \eqref{EP} yields
\begin{equation}\label{5}
\begin{array}[c]{ll}
&x_{n}=x_{n+1}-hv_{n+1}+\frac{h^2}{2}\int_{0}^1
F(x_{n+1}+\tau(x_{n}-x_{n+1}))
d\tau+\frac{h^2}{2}\tilde{B}(\frac{x_{n}+x_{n+1}}{2})\frac{v_{n}+v_{n+1}}{2}, \\
&v_{n}=v_{n+1}-h\int_{0}^1 F(x_{n+1}+\tau(x_{n}-x_{n+1}))
d\tau-h\tilde{B}(\frac{x_{n}+x_{n+1}}{2})\frac{v_{n}+v_{n+1}}{2}.
\end{array}
\end{equation}
From formula \eqref{5}, it follows that
\begin{equation}\label{6}
\begin{array}[c]{ll}
&x_{n+1}=x_{n}+hv_{n}+\frac{h^2}{2}\int_{0}^1
F(x_{n+1}+\tau(x_{n}-x_{n+1}))
d\tau+\frac{h^2}{2}\tilde{B}(\frac{x_{n+1}+x_{n}}{2})\frac{v_{n+1}+v_{n}}{2}, \\
&v_{n+1}=v_{n}+h\int_{0}^1 F(x_{n+1}+\tau(x_{n}-x_{n+1}))
d\tau+h\tilde{B}(\frac{x_{n+1}+x_{n}}{2})\frac{v_{n+1}+v_{n}}{2}.
\end{array}
\end{equation}
Letting $\sigma=1-\tau$ yields
\begin{equation}\nonumber
\begin{array}[c]{ll}
&\int_{0}^{1}F(x_{n+1}+\tau(x_{n}-x_{n+1}))d\tau=\int_{0}^{1}F(x_{n}+(1-\tau)(x_{n+1}-x_{n}))d\tau\\
&=-\int_{1}^{0}F(x_{n}+\sigma(x_{n+1}-x_{n}))d\sigma=\int_{0}^{1}F(x_{n}+\tau(x_{n+1}-x_{n}))d\tau.
\end{array}
\end{equation}
which shows that \eqref{6} is the same as \eqref{EP}. Therefore, the
method \eqref{EP} is symmetric.
\end{proof}
\section{Energy-preserving property}\label{sec3}
In this section, we show the energy-preserving property of the
method \eqref{EP}.
\begin{mytheo}\label{thm ep1}
The method \eqref{EP} preserves the energy $E$ in \eqref{E(x,v)}
exactly, i.e.,
$$E(x_{n+1},v_{n+1})=E(x_{n},v_{n}) \ \ \ \ \ for  \ \ \ \   n=0,1,\ldots $$
\end{mytheo}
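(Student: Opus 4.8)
The plan is to compute $E(x_{n+1},v_{n+1})-E(x_n,v_n)$ directly from the definition \eqref{E(x,v)} and show it vanishes by exploiting the structure of the scheme \eqref{EP}. Write $\Delta E = \tfrac12(|v_{n+1}|^2-|v_n|^2)+U(x_{n+1})-U(x_n)$. The first term factors as $\tfrac12(v_{n+1}+v_n)^{\intercal}(v_{n+1}-v_n)$, so I would substitute the second equation of \eqref{EP} for $v_{n+1}-v_n$:
\[
\tfrac12(v_{n+1}+v_n)^{\intercal}(v_{n+1}-v_n)
= \tfrac{h}{2}(v_{n+1}+v_n)^{\intercal}\!\!\int_0^1 F(x_n+\tau(x_{n+1}-x_n))\,d\tau
+ \tfrac{h}{2}(v_{n+1}+v_n)^{\intercal}\tilde B\!\Big(\tfrac{x_{n+1}+x_n}{2}\Big)\tfrac{v_{n+1}+v_n}{2}.
\]
The key structural observation is that $\tilde B$ is skew-symmetric, so the quadratic form $w^{\intercal}\tilde B(\cdot)w$ vanishes identically with $w=\tfrac{v_{n+1}+v_n}{2}$; the entire magnetic contribution drops out. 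This is exactly why the midpoint evaluation $\tilde B(\tfrac{x_{n+1}+x_n}{2})$ and the average $\tfrac{v_{n+1}+v_n}{2}$ were built into \eqref{EP}.

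Next I would handle the potential term. The idea is to recognize $h(v_{n+1}+v_n)$ — up to the magnetic piece just killed — as related to the position increment. From the first equation of \eqref{EP}, $x_{n+1}-x_n = h v_n + \tfrac{h^2}{2}\int_0^1 F\,d\tau + \tfrac{h^2}{2}\tilde B(\tfrac{x_{n+1}+x_n}{2})\tfrac{v_{n+1}+v_n}{2}$, while from the second, $h v_{n+1} = h v_n + h^2\int_0^1 F\,d\tau + h^2\tilde B(\cdot)\tfrac{v_{n+1}+v_n}{2}$; averaging gives $\tfrac{h}{2}(v_{n+1}+v_n) = x_{n+1}-x_n$ exactly. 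Hence the surviving term is $(x_{n+1}-x_n)^{\intercal}\int_0^1 F(x_n+\tau(x_{n+1}-x_n))\,d\tau$. Since $F=-\nabla U$, the fundamental theorem of calculus applied along the segment from $x_n$ to $x_{n+1}$ yields
\[
(x_{n+1}-x_n)^{\intercal}\!\!\int_0^1 F(x_n+\tau(x_{n+1}-x_n))\,d\tau
= -\!\int_0^1 \tfrac{d}{d\tau}U(x_n+\tau(x_{n+1}-x_n))\,d\tau
= -\big(U(x_{n+1})-U(x_n)\big),
\]
which cancels exactly against $U(x_{n+1})-U(x_n)$ in $\Delta E$. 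Therefore $\Delta E=0$.

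The main obstacle is not analytical depth but bookkeeping: I must be careful that the relation $\tfrac{h}{2}(v_{n+1}+v_n)=x_{n+1}-x_n$ is used consistently, and that the averaged discrete gradient $\int_0^1 F(x_n+\tau(x_{n+1}-x_n))\,d\tau$ is paired with the \emph{exact} chord $x_{n+1}-x_n$ so that the telescoping identity for $U$ applies verbatim. One should also note that the implicit scheme \eqref{EP} is assumed to have a solution $(x_{n+1},v_{n+1})$; existence for small $h$ follows by a standard fixed-point argument but is tangential to the conservation statement itself. Everything else is the elementary manipulation sketched above, and iterating over $n$ gives $E(x_{n+1},v_{n+1})=E(x_n,v_n)$ for all $n=0,1,\ldots$
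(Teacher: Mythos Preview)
Your proof is correct and rests on the same three ingredients as the paper's: the skew-symmetry of $\tilde B$ to kill the magnetic term, the identity $x_{n+1}-x_n=\tfrac{h}{2}(v_{n+1}+v_n)$ obtained by comparing the two lines of \eqref{EP}, and the averaged-vector-field identity $(x_{n+1}-x_n)^{\intercal}\!\int_0^1 F(x_n+\tau(x_{n+1}-x_n))\,d\tau = -(U(x_{n+1})-U(x_n))$. Your polarization $\tfrac12|v_{n+1}|^2-\tfrac12|v_n|^2=\tfrac12(v_{n+1}+v_n)^{\intercal}(v_{n+1}-v_n)$ is a cleaner organizing device than the paper's direct expansion of $\tfrac12 v_{n+1}^{\intercal}v_{n+1}$, but the argument is otherwise the same.
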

\begin{proof}
In this paper, we denote
$\digamma:=\int_{0}^{1}F(x_{n}+\tau(x_{n+1}-x_{n})) d\tau $. We
compute
\begin{equation}\label{1}
E(x_{n+1},v_{n+1})=\frac{1}{2}v_{n+1}^{\intercal}v_{n+1}+U(x_{n+1}).
\end{equation}
Keeping the fact in mind that $\tilde{B}(x)$ is skew-symmetric and
inserting the second formula of \eqref{EP} into \eqref{1} yields
\begin{equation}\label{2}
\begin{array}[c]{llll}
&E(x_{n+1},v_{n+1})\\
=&\frac{1}{2}(v_{n}+h\digamma+h\tilde{B}(\frac{x_{n+1}+x_{n}}{2})\frac{v_{n+1}+v_{n}}{2})^{\intercal}(v_{n}+h\digamma
+h\tilde{B}(\frac{x_{n+1}+x_{n}}{2})\frac{v_{n+1}+v_{n}}{2})+U(x_{n+1})\\
=&\frac{1}{2}v_{n}^{\intercal}v_{n}+\frac{h}{2}v_{n}^{\intercal}\digamma+\frac{h}{4}v_{n}^{\intercal}\tilde{B}(\frac{x_{n+1}+x_{n}}{2})(v_{n+1}+v_{n})+\frac{h}{2}\digamma^{\intercal}v_{n}
+\frac{h^{2}}{2}\digamma^{\intercal}\digamma
+\frac{h^{2}}{4}\digamma^{\intercal}\tilde{B}(\frac{x_{n+1}+x_{n}}{2})(v_{n+1}+v_{n})\\
&-\frac{h}{4}(v_{n+1}+v_{n})^{\intercal}\tilde{B}(\frac{x_{n+1}+x_{n}}{2})v_{n}
-\frac{h^{2}}{8}(v_{n+1}+v_{n})^{\intercal}\tilde{B}(\frac{x_{n+1}+x_{n}}{2})\tilde{B}(\frac{x_{n+1}+x_{n}}{2})(v_{n+1}+v_{n})\\
&-\frac{h^{2}}{4}(v_{n+1}+v_{n})^{\intercal}\tilde{B}(\frac{x_{n+1}+x_{n}}{2})\digamma+U(x_{n+1})\\
=&\frac{1}{2}v_{n}^{\intercal}v_{n}+\frac{h}{2}v_{n}^{\intercal}\digamma+\frac{h}{2}v_{n}^{\intercal
}\tilde{B}(\frac{x_{n+1}+x_{n}}{2})(v_{n+1}+v_{n})+\frac{h}{2}\digamma^{\intercal}v_{n}+\frac{h^{2}}{2}\digamma^{\intercal}\digamma
+\frac{h^{2}}{2}\digamma^{\intercal}\tilde{B}(\frac{x_{n+1}+x_{n}}{2})(v_{n+1}+v_{n})\\
&-\frac{h^{2}}{8}(v_{n+1}+v_{n})^{\intercal}\tilde{B}(\frac{x_{n+1}+x_{n}}{2})\tilde{B}(\frac{x_{n+1}+x_{n}}{2})(v_{n+1}+v_{n})+U(x_{n+1}).
\end{array}
\end{equation}
On the other hand,  we have
\begin{equation}\nonumber
\begin{array}[c]{ll}
&U(x_{n})-U(x_{n+1})=-\int_{0}^{1} dU((1-\sigma)x_{n}+\sigma x_{n+1})\\
&=-\int_{0}^{1}(x_{n+1}-x_{n})^{\intercal}\nabla_{x}U((1-\sigma)x_{n}+\sigma x_{n+1})d\sigma=\digamma^{\intercal}(x_{n+1}-x_{n})\\
&=\frac{h}{2}\digamma^{\intercal}v_{n+1}+\frac{h}{2}\digamma^{\intercal}v_{n}.
\end{array}
\end{equation}
Inserting this result into \eqref{2} implies
\begin{equation}\label{3}
\begin{array}[c]{llll}
&E(x_{n+1},v_{n+1})\\
=&\frac{1}{2}v_{n}^{\intercal}v_{n}+\frac{h}{2}v_{n}^{\intercal}\digamma+\frac{h}{2}v_{n}^{\intercal}\tilde{B}(\frac{x_{n+1}+x_{n}}{2})(v_{n+1}+v_{n})+\frac{h}{2}\digamma^{\intercal}v_{n}+\frac{h^{2}}{2}\digamma^{\intercal}\digamma\\
&+\frac{h^{2}}{2}\digamma^{\intercal}\tilde{B}(\frac{x_{n+1}+x_{n}}{2})(v_{n+1}+v_{n})-\frac{h^{2}}{8}(v_{n+1}+v_{n})^{\intercal}\tilde{B}(\frac{x_{n+1}+x_{n}}{2})\tilde{B}(\frac{x_{n+1}+x_{n}}{2})(v_{n+1}+v_{n})\\
&+U(x_{n})-\frac{h}{2}\digamma^{\intercal}v_{n+1}-\frac{h}{2}\digamma^{\intercal}v_{n}\\
=&\frac{1}{2}v_{n}^{\intercal}v_{n}+U(x_{n})+\frac{h}{2}v_{n}^{\intercal}\digamma+\frac{h}{2}v_{n}^{\intercal}\tilde{B}(\frac{x_{n+1}+x_{n}}{2})(v_{n+1}+v_{n})+\frac{h^{2}}{2}\digamma^{\intercal}\digamma-\frac{h}{2}\digamma^{\intercal}v_{n+1}\\
&+\frac{h^{2}}{2}\digamma^{\intercal}\tilde{B}(\frac{x_{n+1}+x_{n}}{2})(v_{n+1}+v_{n})-\frac{h^{2}}{8}(v_{n+1}+v_{n})^{\intercal}\tilde{B}(\frac{x_{n+1}+x_{n}}{2})\tilde{B}(\frac{x_{n+1}+x_{n}}{2})(v_{n+1}+v_{n}).
\end{array}
\end{equation}
According to the second formula of \eqref{EP}, it follows that
\begin{equation}\nonumber
\begin{array}[c]{ll}\digamma^{\intercal} v_{n+1}&=\digamma^{\intercal}(v_{n}+h\digamma+\frac{h}{2}\tilde{B}(\frac{x_{n+1}+x_{n}}{2})(v_{n+1}+v_{n}))
\\&=\digamma^{\intercal}v_{n}+h\digamma^{\intercal}\digamma+\frac{h}{2}\digamma^{\intercal}\tilde{B}(\frac{x_{n+1}+x_{n}}{2})(v_{n+1}+v_{n}).\end{array}
\end{equation}
Thus, \eqref{3} becomes
\begin{equation}\label{4}
\begin{array}[c]{llll}
&E(x_{n+1},v_{n+1})\\
=&\frac{1}{2}v_{n}^{\intercal}v_{n}+U(x_{n})+\frac{h}{2}v_{n}^{\intercal}\digamma+\frac{h}{2}v_{n}^{\intercal}\tilde{B}(\frac{x_{n+1}+x_{n}}{2})(v_{n+1}+v_{n})\\
&+\frac{h^{2}}{2}\digamma^{\intercal}\digamma-\frac{h}{2}\digamma^{\intercal}v_{n}-\frac{h^{2}}{2}\digamma^{\intercal}\digamma-\frac{h^2}{4}\digamma^{\intercal}\tilde{B}(\frac{x_{n}+x_{n+1}}{2})(v_{n+1}+v_{n})\\
&+\frac{h^{2}}{2}\digamma^{\intercal}\tilde{B}(\frac{x_{n+1}+x_{n}}{2})(v_{n+1}+v_{n})-\frac{h^{2}}{8}(v_{n+1}+v_{n})^{\intercal}\tilde{B}(\frac{x_{n+1}+x_{n}}{2})\tilde{B}(\frac{x_{n+1}+x_{n}}{2})(v_{n+1}+v_{n})\\
=&\frac{1}{2}v_{n}^{\intercal}v_{n}+U(x_{n})+\frac{h}{2}(v_{n}^{\intercal}+\frac{h}{2}\digamma^{\intercal}-\frac{h}{4}(v_{n+1}+v_{n})^{\intercal}\tilde{B}(\frac{x_{n+1}+x_{n}}{2}))\tilde{B}(\frac{x_{n+1}+x_{n}}{2})(v_{n+1}+v_{n}).
\end{array}
\end{equation}
Based on the property of $\tilde{B}(x)$ and the second formula of
\eqref{EP}, we obtain
\begin{equation}\nonumber
-\frac{h}{4}(v_{n+1}+v_{n})^{\intercal}\tilde{B}(\frac{x_{n+1}+x_{n}}{2})=(\frac{h}{4}\tilde{B}(\frac{x_{n+1}+x_{n}}{2})(v_{n+1}+v_{n}))^{\intercal}=\frac{1}{2}v_{n+1}^{\intercal}-\frac{1}{2}v_{n}^{\intercal}-\frac{h}{2}\digamma^{\intercal}.
\end{equation}
Hence, \eqref{4} can be rewritten as
\begin{equation}\label{8}
\begin{array}[c]{ll}
&E(x_{n+1},v_{n+1})\\
=&\frac{1}{2}v_{n}^{\intercal}v_{n}+U(x_{n})+\frac{h}{2}(v_{n}^{\intercal}+\frac{h}{2}\digamma^{\intercal}+\frac{1}{2}v_{n+1}^{\intercal}-\frac{1}{2}v_{n}^{\intercal}-\frac{h}{2}\digamma^{\intercal})\tilde{B}(\frac{x_{n+1}+x_{n}}{2})(v_{n+1}+v_{n})\\
=&\frac{1}{2}v_{n}^{\intercal}v_{n}+U(x_{n})+\frac{h}{4}(v_{n+1}+v_{n})^{\intercal}\tilde{B}(\frac{x_{n+1}+x_{n}}{2})(v_{n+1}+v_{n}).
\end{array}
\end{equation}
Then,  it can be checked that
\begin{equation}\nonumber
\begin{array}[c]{ll}
(\frac{h}{4}(v_{n+1}+v_{n})^{\intercal}\tilde{B}(\frac{x_{n+1}+x_{n}}{2})(v_{n+1}+v_{n}))^{\intercal}=-\frac{h}{4}(v_{n+1}+v_{n})^{\intercal}\tilde{B}(\frac{x_{n+1}+x_{n}}{2})(v_{n+1}+v_{n}),\\
\end{array}
\end{equation}
which shows that
$\frac{h}{4}(v_{n+1}+v_{n})^{\intercal}\tilde{B}(\frac{x_{n+1}+x_{n}}{2})(v_{n+1}+v_{n})=0$.
Therefore, \eqref{8} becomes
$$E(x_{n+1},v_{n+1})=\frac{1}{2}v_{n}^{\intercal}v_{n}+U(x_{n})=E(x_{n},v_{n}).$$
This implies the statement of the theorem.
\end{proof}

We note that the integral in \eqref{EP} can be evaluated exactly if
$ U $ is a special function, we have the following theorem.
\begin{mytheo}
Assume $ U=U(a^{\intercal}x) $ with $ a\in \mathbb{R}^{3} $, then
$$ \int_{0}^{1}F(x_{n}+\tau(x_{n+1}-x_{n}))d\tau=\frac{-a}{a^{\intercal}x_{n+1}-a^{\intercal}x_{n}}(U(a^{\intercal}x_{n+1})-U(a^{\intercal}x_{n})).$$
\end{mytheo}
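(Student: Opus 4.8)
The plan is to exploit the special structure $U(x)=U(a^{\intercal}x)$ to collapse the vector-valued integral $\digamma=\int_{0}^{1}F(x_{n}+\tau(x_{n+1}-x_{n}))\,d\tau$ of \eqref{EP} into a one-dimensional integral of an exact derivative. First I would combine $F(x)=-\nabla_{x}U(x)$ with the chain rule applied to the composite potential: writing $U'$ for the derivative of the scalar map $s\mapsto U(s)$, one has $\nabla_{x}\big(U(a^{\intercal}x)\big)=U'(a^{\intercal}x)\,a$, hence $F(x)=-U'(a^{\intercal}x)\,a$. Substituting $x=x_{n}+\tau(x_{n+1}-x_{n})$ gives $F\big(x_{n}+\tau(x_{n+1}-x_{n})\big)=-U'\big(a^{\intercal}x_{n}+\tau\,a^{\intercal}(x_{n+1}-x_{n})\big)\,a$, and because $a$ is a constant vector it factors out of the integral.

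Next I would introduce the scalars $s_{n}:=a^{\intercal}x_{n}$ and $s_{n+1}:=a^{\intercal}x_{n+1}$, reducing the task to the scalar integral $\int_{0}^{1}U'\big(s_{n}+\tau(s_{n+1}-s_{n})\big)\,d\tau$. Under the generic assumption $s_{n+1}\neq s_{n}$, the change of variable $u=s_{n}+\tau(s_{n+1}-s_{n})$ turns this into $\frac{1}{s_{n+1}-s_{n}}\int_{s_{n}}^{s_{n+1}}U'(u)\,du$, which the fundamental theorem of calculus evaluates to $\frac{U(s_{n+1})-U(s_{n})}{s_{n+1}-s_{n}}$. Reinstating the prefactor $-a$ and rewriting $s_{n}$, $s_{n+1}$ then yields exactly $\digamma=\frac{-a}{a^{\intercal}x_{n+1}-a^{\intercal}x_{n}}\big(U(a^{\intercal}x_{n+1})-U(a^{\intercal}x_{n})\big)$, as claimed.

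I do not expect any substantial obstacle: the computation is short, and its only delicate point is the degenerate case $a^{\intercal}x_{n+1}=a^{\intercal}x_{n}$, where the stated right-hand side is a $0/0$ expression that should be read as its limit $-a\,U'(a^{\intercal}x_{n})$; this is precisely the value of the scalar integral when the integrand is constant, so the identity remains valid there by continuity. The mechanism is the same one already used in the proof of Theorem \ref{thm ep1}, namely that a gradient field integrated along a segment returns the difference of the potential values at the endpoints; the additional feature being exploited here is that $\nabla_{x}U$ is a fixed scalar multiple of the constant direction $a$, which is what permits the closed-form evaluation of the integral.
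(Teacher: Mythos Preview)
Your argument is correct and matches the paper's proof essentially step for step: both compute $F(x)=-a\,U'(a^{\intercal}x)$ via the chain rule, factor out the constant vector $a$, and then evaluate the remaining scalar integral by the fundamental theorem of calculus (the paper phrases this last step as recognizing $\frac{d}{d\tau}U(a^{\intercal}((1-\tau)x_{n}+\tau x_{n+1}))$ rather than performing an explicit substitution, but the content is identical). Your remark on the degenerate case $a^{\intercal}x_{n+1}=a^{\intercal}x_{n}$ is a small addition not present in the paper.
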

\begin{proof}
This result is obtained immediately by considering the following
fact
\begin{equation}\nonumber
\begin{array}
[c]{ll} &\int_{0}^{1}F(x_{n}+\tau(x_{n+1}-x_{n}))d\tau\\
=&\int_{0}^{1}F((1-\tau)x_{n}+\tau
x_{n+1})d\tau\\
=&-\int_{0}^{1}aU^{'}(a^{\intercal}((1-\tau)x_{n}
+\tau x_{n+1}))d\tau\\
=&\frac{-a}{a^{\intercal}x_{n+1}-a^{\intercal}x_{n}}\int_{0}^{1}\frac{dU(a^{\intercal}((1-\tau)x_{n}
+\tau
x_{n+1}))}{d\tau}d\tau\\
=&\frac{-a}{a^{\intercal}x_{n+1}-a^{\intercal}x_{n}}(U(a^{\intercal}x_{n+1})-U(a^{\intercal}x_{n})).
\end{array}
\end{equation}

\end{proof}

 We next consider the case that the integral in \eqref{EP} cannot be evaluated exactly. Under this situation, it is natural to use
  a numerical quadrature formula for the integral. Here we choose $s$-point  Gauss-Legendre's quadrature for the integral in \eqref{EP} and
 get the following result.

\begin{mytheo}\label{thm ep2}
 Assume that $U(x)$ is  a polynomial in $ x $ of degree $ n .$
  Let  $(b_{i},c_{i})$ for $i=1,\dots,s $ respectively  be the weights and the nodes of an $s$-point Gauss-Legendre's quadrature on $[0,1]$ that is exact for polynomials of degree $\leq n-1 $.
 Then the following modified energy-preserving method
\begin{equation}\label{EPGL}
\begin{array}
[c]{ll}
&x_{n+1}=x_{n}+hv_{n}+\frac{h^2}{2}\sum\limits_{i=1}^{s}b_{i}F(x_{n}+c_{i}(x_{n+1}-x_{n}))+\frac{h^2}{2}\tilde{B}(\frac{x_{n+1}+x_{n}}{2})\frac{v_{n+1}+v_{n}}{2},\\
&v_{n+1}=v_{n}+h\sum\limits_{i=1}^{s}b_{i}F(x_{n}+c_{i}(x_{n+1}-x_{n}))+h\tilde{B}(\frac{x_{n+1}+x_{n}}{2})\frac{v_{n+1}+v_{n}}{2}.
\end{array}
\end{equation}
exactly preserves $ E $ defined by \eqref{E(x,v)}.
\end{mytheo}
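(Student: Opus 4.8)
The plan is to mimic the proof of Theorem~\ref{thm ep1} almost verbatim, the only difference being that the exact integral $\digamma=\int_0^1 F(x_n+\tau(x_{n+1}-x_n))\,d\tau$ is replaced by its quadrature approximation $\digamma_s:=\sum_{i=1}^s b_i F(x_n+c_i(x_{n+1}-x_n))$. First I would inspect the energy-conservation argument for \eqref{EP} and observe that it uses the defining relations of the method in two places: (i) algebraically, through the second line of \eqref{EP}, which here becomes $v_{n+1}=v_n+h\digamma_s+h\tilde B(\tfrac{x_{n+1}+x_n}{2})\tfrac{v_{n+1}+v_n}{2}$, and (ii) through the telescoping identity $U(x_n)-U(x_{n+1})=\digamma^{\intercal}(x_{n+1}-x_n)$ together with $x_{n+1}-x_n=\tfrac h2(v_{n+1}+v_n)+\tfrac{h^2}{2}\digamma$. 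Part (i) carries over with $\digamma$ simply renamed $\digamma_s$ throughout, so the entire chain \eqref{2}--\eqref{8} goes through with that substitution, yielding $E(x_{n+1},v_{n+1})=\tfrac12 v_n^{\intercal}v_n+U(x_n)+\tfrac h4(v_{n+1}+v_n)^{\intercal}\tilde B(\tfrac{x_{n+1}+x_n}{2})(v_{n+1}+v_n)$, and the last term again vanishes by skew-symmetry of $\tilde B$.

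The only genuinely new ingredient is therefore re-establishing the key identity $U(x_n)-U(x_{n+1})=\digamma_s^{\intercal}(x_{n+1}-x_n)$ with the \emph{quadrature} sum in place of the exact integral. This is where the polynomial hypothesis on $U$ and the exactness of the $s$-point Gauss--Legendre rule enter. I would argue as follows: since $U$ is a polynomial of degree $n$, the scalar function $\phi(\tau):=U((1-\tau)x_n+\tau x_{n+1})$ is a polynomial in $\tau$ of degree at most $n$, hence $\phi'(\tau)=(x_{n+1}-x_n)^{\intercal}\nabla U((1-\tau)x_n+\tau x_{n+1})$ is a polynomial in $\tau$ of degree at most $n-1$. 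Because the quadrature is exact for polynomials of degree $\le n-1$,
\begin{equation}\nonumber
\begin{array}[c]{ll}
U(x_{n+1})-U(x_n)&=\displaystyle\int_0^1 \phi'(\tau)\,d\tau=\sum_{i=1}^s b_i\,\phi'(c_i)\\[2mm]
&=\displaystyle\sum_{i=1}^s b_i\,(x_{n+1}-x_n)^{\intercal}\nabla U(x_n+c_i(x_{n+1}-x_n))\\[2mm]
&=-\,\Big(\sum_{i=1}^s b_i F(x_n+c_i(x_{n+1}-x_n))\Big)^{\!\intercal}(x_{n+1}-x_n)=-\digamma_s^{\intercal}(x_{n+1}-x_n),
\end{array}
\end{equation}
using $F=-\nabla U$. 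Then, exactly as before, $x_{n+1}-x_n=\tfrac h2(v_{n+1}+v_n)+\tfrac{h^2}{2}\digamma_s$ from the first line of \eqref{EPGL}, so $\digamma_s^{\intercal}(x_{n+1}-x_n)=\tfrac h2\digamma_s^{\intercal}(v_{n+1}+v_n)+\tfrac{h^2}{2}\digamma_s^{\intercal}\digamma_s$, and in particular the relation $\digamma_s^{\intercal}v_{n+1}=\digamma_s^{\intercal}v_n+h\digamma_s^{\intercal}\digamma_s+\tfrac h2\digamma_s^{\intercal}\tilde B(\tfrac{x_{n+1}+x_n}{2})(v_{n+1}+v_n)$ used in passing from \eqref{3} to \eqref{4} is reproduced with $\digamma_s$.

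I do not anticipate a serious obstacle: the argument is structurally identical to Theorem~\ref{thm ep1}, and the one substantive point—that $\digamma_s$ still satisfies the discrete energy identity—is a one-line consequence of the degree bookkeeping above. The mild subtlety worth stating carefully is the degree count: one must note that $\nabla U$ has degree $\le n-1$, so $\tau\mapsto(x_{n+1}-x_n)^{\intercal}\nabla U(x_n+\tau(x_{n+1}-x_n))$ has degree $\le n-1$ in $\tau$, which is precisely the exactness threshold assumed for the $s$-point rule; hence no error term is incurred and equality, not merely approximation, holds. Given this, I would write the proof simply as ``repeat the computation in the proof of Theorem~\ref{thm ep1} with $\digamma$ replaced by $\digamma_s$; the identity $U(x_n)-U(x_{n+1})=\digamma_s^{\intercal}(x_{n+1}-x_n)$ remains valid by the exactness of the Gauss--Legendre quadrature for the degree-$\le n-1$ integrand $\phi'$; all remaining steps, in particular the vanishing of $\tfrac h4(v_{n+1}+v_n)^{\intercal}\tilde B(\tfrac{x_{n+1}+x_n}{2})(v_{n+1}+v_n)$ by skew-symmetry, are unchanged,'' and conclude $E(x_{n+1},v_{n+1})=E(x_n,v_n)$.
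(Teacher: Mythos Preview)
The paper states Theorem~\ref{thm ep2} without proof, so there is nothing to compare against line by line; your argument is correct and is almost certainly what the authors have in mind. In fact the conclusion follows even more directly than you present it: since $U$ is a polynomial of degree $n$, each component of $F=-\nabla U$ is a polynomial of degree $\le n-1$, so the map $\tau\mapsto F(x_n+\tau(x_{n+1}-x_n))$ is componentwise a polynomial in $\tau$ of degree $\le n-1$; by the assumed exactness of the Gauss--Legendre rule one has $\digamma_s=\sum_i b_i F(x_n+c_i(x_{n+1}-x_n))=\int_0^1 F(x_n+\tau(x_{n+1}-x_n))\,d\tau=\digamma$ exactly, and hence the scheme \eqref{EPGL} is \emph{identical} to \eqref{EP}. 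Theorem~\ref{thm ep1} then applies verbatim. Your route---checking only the scalar identity $U(x_n)-U(x_{n+1})=\digamma_s^{\intercal}(x_{n+1}-x_n)$ via exactness on $\phi'$---is of course equivalent, just slightly less direct.

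One small slip to fix: you write $x_{n+1}-x_n=\tfrac h2(v_{n+1}+v_n)+\tfrac{h^2}{2}\digamma_s$, but combining the two lines of \eqref{EPGL} gives simply $x_{n+1}-x_n=\tfrac h2(v_{n+1}+v_n)$, with no extra $\digamma_s$ term (this is also how the paper uses it in the proof of Theorem~\ref{thm ep1}). The slip is harmless for your argument, since the relation you actually need downstream, $\digamma_s^{\intercal}v_{n+1}=\digamma_s^{\intercal}v_n+h\digamma_s^{\intercal}\digamma_s+\tfrac h2\digamma_s^{\intercal}\tilde B(\cdot)(v_{n+1}+v_n)$, comes from the second line of \eqref{EPGL} alone and is stated correctly.
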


\section{Long-time momentum conservation}\label{sec4}
 In this section, we study the long-time momentum
conservation of the  method \eqref{EPGL}.
\subsection{Main result of the section}
\begin{mytheo}\label{Thm lm}
If the numerical solution  \eqref{EPGL} stays in a compact set that
is independent of $ h $ and  $B(x)=B $ is a constant magnetic field,
then for arbitrary positive integers $ N $, the method \eqref{EPGL}
conserves the momentum over long times as follows
\begin{equation}\nonumber
M(x_{n},v_{n})=M(x_{0},v_{0})+\mathcal{O}(h^{2})  \ \ \ \
\textmd{for} \ \ \ \ 0 \leq nh \leq Ch^{-N+2},
\end{equation}
where the constant symbolized by $\mathcal{O}$ is independent of $ n
$ and $ h $.
\end{mytheo}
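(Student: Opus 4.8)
The plan is to run a backward error analysis and then, in the spirit of Noether's theorem, produce a \emph{modified} momentum that the method conserves to high order; the long-time statement then follows by a routine error propagation. Two reductions, valid because $B$ is constant, are used throughout. First, we may take the vector potential $A(x)=-\tfrac12\tilde B x$, so $M(x,v)=v^\intercal Sx+\tfrac12 x^\intercal\tilde B Sx$, and hypothesis \eqref{M} forces $S\tilde B=\tilde B S$ together with $\nabla U(x)^\intercal Sx\equiv 0$; these two facts are precisely what makes $M$ a first integral of \eqref{charged-particle}, as in the computation recalled after \eqref{M}. Second, since $U$ is a polynomial the quadrature sum in \eqref{EPGL} equals $\digamma:=\int_0^1 F(x_n+\tau(x_{n+1}-x_n))\,d\tau$ exactly, so \eqref{EPGL} coincides with $x_{n+1}-x_n=\tfrac h2(v_{n+1}+v_n)$ (this relation being immediate from \eqref{EPGL}) together with $v_{n+1}-v_n=h\digamma+h\tilde B\tfrac{v_{n+1}+v_n}{2}$; the vector field $f$ of \eqref{charged-particle} in $y=(x,v)$ is smooth, and by hypothesis the numerical solution stays in a fixed compact set $K$, so all estimates below are uniform on a neighbourhood of $K$.

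First I would set up the modified equation: by the standard construction (see \cite{hairer2006}) there is a modified vector field $f_h=f+h^2 f_2+h^4 f_4+\cdots$ — only even powers of $h$ occur, because \eqref{EPGL} is symmetric — whose time-$h$ flow $\tilde\varphi_h$ satisfies $\Phi_h(y)=\tilde\varphi_h(y)+\mathcal{O}(h^{2\nu+1})$ on a neighbourhood of $K$ once the series is truncated after $h^{2\nu}$, where $\Phi_h$ is the one-step map of \eqref{EPGL}. The field $f_h$ inherits two structural features. It preserves $E$ exactly (because \eqref{EPGL} does, Theorem \ref{thm ep2}); writing the components of $f_h$ as $(v+\mathcal{O}(h^2),\,g_h(x,v))$, exact energy preservation gives $v^\intercal g_h(x,v)=-\nabla U(x)^\intercal v$ for all $(x,v)$, so that $g_h$ should again have charged-particle form $g_h(x,v)=\tilde B_h(x)v-\nabla U(x)$ with $\tilde B_h=\tilde B+h^2\tilde B_2(x)+\cdots$ skew. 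And $f_h$ is equivariant under $\rho_\tau:(x,v)\mapsto(e^{\tau S}x,e^{\tau S}v)$, because $\Phi_h$ is: \eqref{M} yields $F(e^{\tau S}x)=e^{\tau S}F(x)$ and $e^{\tau S}\tilde B=\tilde B e^{\tau S}$, and $e^{\tau S}$ is linear, so every term of \eqref{EPGL} transforms covariantly; hence $\tilde B_h$ is $\rho_\tau$-equivariant while $U$ stays $\rho_\tau$-invariant, i.e.\ the modified system is again of the shape to which the continuous momentum argument applies.

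Next I would construct a first integral $\hat M_h=M+h^2 M_2+h^4 M_4+\cdots$ of the modified equation with $\hat M_h=M+\mathcal{O}(h^2)$. The leading obstruction comes from a one-step computation: using $x_{n+1}-x_n=\tfrac h2(v_{n+1}+v_n)$ one gets $M(x_{n+1},v_{n+1})-M(x_n,v_n)=h\,\digamma^\intercal S\bar x$ with $\bar x=\tfrac12(x_n+x_{n+1})$, and a Taylor expansion about $\bar x$ together with $\nabla U(y)^\intercal Sy\equiv0$ collapses this to $\tfrac{h^3}{12}\,\bar v^\intercal\nabla^2 U(\bar x)S\bar v+\mathcal{O}(h^5)$, $\bar v:=\tfrac12(v_n+v_{n+1})$, odd orders vanishing by symmetry. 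The decisive identity is that, along solutions of \eqref{charged-particle}, $v^\intercal\nabla^2 U(x)Sv=\tfrac{d}{dt}\bigl(-v^\intercal S\nabla U(x)+\tfrac12 v^\intercal S\tilde B v\bigr)$, which one checks using $S\tilde B=\tilde B S$, the skew-symmetry of $S\tilde B^2$, and $\nabla U(x)^\intercal Sx\equiv0$; so the leading obstruction is an $L_f$-coboundary and is removed by $M_2=\tfrac1{12}\bigl(v^\intercal S\nabla U(x)-\tfrac12 v^\intercal S\tilde B v\bigr)$. Equivalently, one may look for a $\rho_\tau$-equivariant modified vector potential $A_h=A+h^2 A_2(x)+\cdots$ with $\tilde B_h(x)v=v\times(\nabla\times A_h)(x)$ and set $\hat M_h(x,v)=(v+A_h(x))^\intercal Sx$; multiplying $\ddot{\tilde x}=\dot{\tilde x}\times(\nabla\times A_h)(\tilde x)-\nabla U(\tilde x)$ by $\tilde x^\intercal S$ and using $\dot{\tilde x}^\intercal S\dot{\tilde x}\equiv0$, $\nabla U(\tilde x)^\intercal S\tilde x\equiv0$ and $\tilde x^\intercal S\bigl(\dot{\tilde x}\times(\nabla\times A_h)(\tilde x)\bigr)=-\tfrac{d}{dt}\bigl(\tilde x^\intercal S A_h(\tilde x)\bigr)$ then shows $\hat M_h$ is conserved.

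The step I expect to be the main obstacle is closing either recursion at \emph{all} orders: showing that the obstruction surviving at order $h^{2k}$ is always an $L_f$-coboundary, equivalently that $g_h$ really does retain the charged-particle form with a $\rho_\tau$-equivariant modified vector potential. This should follow from the equivariance and exact energy preservation of $f_h$ — the constraint $v^\intercal g_h(x,v)=-\nabla U(x)^\intercal v$ and the $\rho_\tau$-invariance of $\nabla M\cdot f_h$ (a consequence of $[\rho_\tau,f_h]=0$ and of $M$ generating the action $\rho_\tau$) — mirroring the displayed computation, but making it rigorous is the technical heart. Granting it, the proof finishes by propagation: with the series truncated after $h^{2\nu}$, $\hat M_h$ changes by $\mathcal{O}(h^{2\nu+1})$ along one step of $\Phi_h$ (combine $\Phi_h=\tilde\varphi_h+\mathcal{O}(h^{2\nu+1})$, Lipschitz continuity of $\hat M_h$ on $K$, and near-conservation of $\hat M_h$ along $\tilde\varphi_h$), so summing over $n\le Ch^{-2\nu+1}$ steps gives $\hat M_h(x_n,v_n)=\hat M_h(x_0,v_0)+\mathcal{O}(h^2)$; since $\hat M_h=M+\mathcal{O}(h^2)$ uniformly on $K$ this yields $M(x_n,v_n)=M(x_0,v_0)+\mathcal{O}(h^2)$ for $0\le nh\le Ch^{-2\nu+2}$, and choosing $\nu$ with $2\nu\ge N$ covers the interval in the statement.
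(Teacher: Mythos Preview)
Your plan is in the right spirit --- backward error analysis producing a modified momentum conserved to $\mathcal{O}(h^{N})$, then a telescoping bound --- and the final propagation step matches the paper's. But the execution differs substantially, and the gap you yourself flag (closing the recursion at all orders, equivalently showing $f_h$ retains charged-particle form with an equivariant modified potential) is a genuine obstacle on your route that the paper sidesteps entirely.

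Rather than work with the first-order modified field $f_h$ on $(x,v)$-space, the paper writes the modified equation as a single \emph{second-order} equation for $z(t)=L_2(e^{hD})x(t)$, where $L_1(\varphi)=\varphi-1$ and $L_2(\varphi)=\tfrac12(\varphi+1)$:
\[
\ddot z-\tfrac{h^2}{6}z^{(4)}+\cdots=\tilde{B}(z)\bigl(\dot z-\tfrac{h^2}{12}z^{(3)}+\cdots\bigr)+\textstyle\sum_i b_i F(\omega),
\]
and then simply multiplies by $z^\intercal S$. Since $S$ is skew, $z^\intercal S z^{(k)}$ is a total time derivative for every \emph{even} $k$, so the whole left-hand side collapses at once; the potential terms vanish by invariance; and the leading magnetic term satisfies $z^\intercal S(\dot z\times B(z))=-\tfrac{d}{dt}(z^\intercal SA(z))$ as in the continuous case. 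What survives is $h^2 z^\intercal S\tilde{B}(z)\bigl(-\tfrac1{12}z^{(3)}+\tfrac{h^2}{120}z^{(5)}-\cdots\bigr)$. For constant $B$ one takes $S=\tilde{B}$, so this becomes $-h^2(\tilde{B}z)^\intercal\tilde{B}\bigl(-\tfrac1{12}z^{(3)}+\cdots\bigr)$; the bilinear form $(u,w)\mapsto(\tilde{B}u)^\intercal(\tilde{B}w)$ is \emph{symmetric}, so each term $(\tilde{B}z)^\intercal(\tilde{B}z^{(k)})$ with \emph{odd} $k$ is again a total derivative. The modified momentum is thus exhibited explicitly at every order in one stroke --- no order-by-order coboundary recursion is needed.

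There is also a smaller technical slip in your sketch: exact energy preservation of $\Phi_h$ yields $v^\intercal g_h+\nabla U(x)^\intercal a_h=0$, where $a_h$ is the $x$-component of $f_h$. Since $a_h=v+\mathcal{O}(h^2)$ rather than $a_h=v$, the identity $v^\intercal g_h=-\nabla U(x)^\intercal v$ does not follow, and hence the conclusion that $g_h=\tilde{B}_h(x)v-\nabla U(x)$ with skew $\tilde{B}_h$ is not immediate. Your equivariance observation is correct and useful, but on its own it does not manufacture the modified vector potential $A_h$. The paper's total-derivative device is both shorter and avoids these difficulties.
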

\begin{rem}
According to this result, it is known that the order of the
Gauss-Legendre's quadrature using in  \eqref{EPGL} does not
influence the long time  momentum  conservation. This point can be
seen from the numerical results given in Section \ref{sec6}.
\end{rem}

\subsection{Proof of Theorem \ref{Thm lm}}
In order to prove the result, we need to use backward error analysis
(see Chap. IX of \cite{hairer2006}).

To this end, we require a modified differential equation and its
solution $ (x(t),v(t)) $ satisfies $ (x(nh),v(nh))=(x_{n},v_{n} )$
with the solution  $ (x_{n},v_{n}) $ obtained by the  method
 \eqref{EPGL}. Such a function has to satisfy
\begin{equation}\label{7}
\begin{array}
[c]{ll}
&x(t+h)=x(t)+\frac{h}{2}(v(t+h)+v(t)),\\
&v(t+h)=v(t)+h\tilde{B}(\frac{x(t+h)+x(t)}{2})\frac{v(t+h)+v(t)}{2}+h\sum\limits_{i=1}^{s}b_{i}F(x(t)+c_{i}(x(t+h)-x(t))).
\end{array}
\end{equation}
Based on those formulas, we define
$$L_{1}(\varphi)=\varphi-1,L_{2}(\varphi)=\frac{\varphi+1}{2}.$$
Then \eqref{7} becomes
\begin{equation}\nonumber
\begin{array}
[c]{ll}
&L_{1}(e^{hD})x(t)=hL_{2}(e^{hD})v(t),\\
&L_{1}(e^{hD})v(t)=h\tilde{B}(L_{2}(e^{hD})x(t))L_{2}(e^{hD})v(t)+h\sum\limits_{i=1}^{s}b_{i}F(x(t)+c_{i}L_{1}(e^{hD})x(t)),
\end{array}
\end{equation}
where $D$ is the differential operator (see \cite{hairer2006}).
 By letting $z(t)=L_{2}(e^{hD})x(t),$ we have
\begin{equation}\label{L}
\begin{array}
[c]{ll}
\frac{1}{h^{2}}L_{1}^{2}L_{2}^{-2}(e^{hD})z(t)=\frac{1}{h}\tilde{B}(z(t))L_{1}L_{2}^{-1}(e^{hD})z(t)
+\sum\limits_{i=1}^{s}b_{i}F(L_{2}^{-1}(e^{hD})z(t)+c_{i}L_{1}L_{2}^{-1}(e^{hD})z(t)).
\end{array}
\end{equation}
Based on the following properties
\begin{equation}\nonumber
\begin{array}
[c]{ll}
&L_{1}^{2}L_{2}^{-2}(e^{hD})=h^{2}D^{2}-\frac{1}{6}h^{4}D^{4}+\frac{17}{720}h^{6}D^{6}+ \ldots,\\
&L_{1}L_{2}^{-1}(e^{hD})=hD-\frac{1}{12}h^{3}D^{3}+\frac{1}{120}h^{5}D^{5}- \ldots,\\
&L_{2}^{-1}(e^{hD})=1-\frac{1}{2}hD+\frac{1}{24}h^{3}D^{3}-\frac{1}{240}h^{5}D^{5}+
\ldots,
\end{array}
\end{equation}
the formula \eqref{L} can rewritten as
\begin{equation}\label{Me}
\begin{array}
[c]{ll} \ddot{z}-\frac{1}{6}h^{2}z^{(4)}+\frac{17}{720}h^{4}z^{(6)}-
\ldots=\tilde{B}(z)(\dot{z}-\frac{1}{12}h^{2}z^{(3)}+\frac{1}{120}h^{4}z^{(5)}-
\ldots)+\sum\limits_{i=1}^{s}b_{i}F(\omega),
\end{array}
\end{equation}
where $
\omega=z+h(c_{i}-\frac{1}{2})\dot{z}+\frac{1}{12}h^{3}(-c_{i}+\frac{1}{2})z^{(3)}+\frac{1}{120}h^{5}(c_{i}-\frac{1}{2})z^{(5)}+
\ldots $.

\begin{lem}\label{theo}
If $ U(x) $ and $ A(x) $  satisfy the properties \eqref{M}, then
there exist $ h $-independent functions $ M_{2i}(x,v)$ such that the
function
$$ M_{h}(x,v)=M(x,v)+h^{2}M_{2}(x,v)+h^{4}M_{4}(x,v)+\ldots ,$$
truncated at the $\mathcal{O}(h^{N})$ term, satisfies
\begin{equation}\nonumber
\frac{d}{dt}M_{h}(z,\dot{z})=h^{2}z^{\intercal}S\tilde{B}(z)(-\frac{1}{12}z^{(3)}+\frac{h^{2}}{120}z^{(5)}-
\ldots )+\mathcal{O}(h^{N})
\end{equation}
along solutions of the modified differential equation \eqref{Me}.
\end{lem}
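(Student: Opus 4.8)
The plan is to run a backward error analysis in the spirit of Chapter~IX of \cite{hairer2006}: multiply the modified differential equation \eqref{Me} on the left by $z^{\intercal}S$, split the resulting scalar identity into a part that is a total time derivative and a part that is not, absorb the former into the successive correction terms $h^{2i}M_{2i}$, and show that the latter is exactly the right-hand side of the asserted formula. By the symmetry of the scheme \eqref{EPGL} and of the Gauss--Legendre quadrature, all the series involved --- in particular the effective right-hand side of \eqref{Me} after carrying out the sum $\sum_i b_i F(\omega)$ --- contain only even powers of $h$, which is what produces an expansion $M_h=M+h^{2}M_{2}+h^{4}M_{4}+\cdots$.

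I would first record three consequences of the hypotheses \eqref{M}, as recalled in the Introduction. \textbf{(a)} Since $S\nabla U\equiv 0$, one has $SF\equiv 0$, hence $z^{\intercal}SF(\xi)=0$ for \emph{any} argument $\xi$; in particular $z^{\intercal}SF(\omega)=0$ at every quadrature node in \eqref{Me}, so the whole force term drops out after multiplication by $z^{\intercal}S$. \textbf{(b)} Because $\dot z^{\intercal}S\dot z=0$ we have $z^{\intercal}S\ddot z=\tfrac{d}{dt}(z^{\intercal}S\dot z)$, and the identity recalled in the Introduction gives $z^{\intercal}S\tilde{B}(z)\dot z=z^{\intercal}S(\dot z\times B(z))=-\tfrac{d}{dt}(z^{\intercal}SA(z))$; adding these and using $M(z,\dot z)=(\dot z+A(z))^{\intercal}Sz=-z^{\intercal}S(\dot z+A(z))$ gives $z^{\intercal}S\ddot z-z^{\intercal}S\tilde{B}(z)\dot z=-\tfrac{d}{dt}M(z,\dot z)$. \textbf{(c)} For every $k\ge 1$ the scalar $z^{\intercal}Sz^{(2k)}$ is a total time derivative of a polynomial in $z,\dot z,\dots,z^{(2k-1)}$; this follows by repeatedly applying $z^{(p)\intercal}Sz^{(q)}=\tfrac{d}{dt}(z^{(p)\intercal}Sz^{(q-1)})-z^{(p+1)\intercal}Sz^{(q-1)}$ until the two derivative orders meet, where $z^{(k)\intercal}Sz^{(k)}=0$.

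Now multiply \eqref{Me} through by $z^{\intercal}S$. By (a) the force contribution vanishes; by (c) each term $z^{\intercal}Sz^{(2k)}$ with $k\ge 2$ occurring on the left is a total time derivative of a polynomial in $z$ and its derivatives; and by (b) the leading term $z^{\intercal}S\ddot z$ on the left together with the leading magnetic term $z^{\intercal}S\tilde{B}(z)\dot z$ on the right combine into $-\tfrac{d}{dt}M(z,\dot z)$. Collecting what remains, one arrives at an identity
$$\frac{d}{dt}\bigl(M(z,\dot z)+\Psi_h\bigr)=z^{\intercal}S\tilde{B}(z)\Bigl(-\tfrac{1}{12}h^{2}z^{(3)}+\tfrac{1}{120}h^{4}z^{(5)}-\cdots\Bigr)$$
along solutions of \eqref{Me}, where $\Psi_h=\mathcal{O}(h^{2})$ is an explicit $h$-dependent polynomial in $z,\dot z,\ddot z,\dots$ whose numerical coefficients are those read off from the expansions of $L_1^{2}L_2^{-2}$, $L_1L_2^{-1}$, $L_2^{-1}$ (the sign on the right being the one dictated by these expansions). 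The only point that remains is to convert $\Psi_h$ into a function of $(x,v)=(z,\dot z)$: differentiating \eqref{Me} successively expresses each $z^{(m)}$, $m\ge 2$, as an $h$-independent function of $(z,\dot z)$ up to $\mathcal{O}(h^{2})$; substituting these expansions into $\Psi_h$, collecting powers of $h$ and truncating at order $h^{N}$ produces $h$-independent functions $M_{2},M_{4},\dots$ with $\Psi_h=\sum_{i\ge 1}h^{2i}M_{2i}(z,\dot z)+\mathcal{O}(h^{N})$ along solutions of \eqref{Me}, the $\mathcal{O}(h^{N})$ remainder having time derivative $\mathcal{O}(h^{N})$ on the compact set. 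Setting $M_h:=M+\sum_{i\ge 1}h^{2i}M_{2i}$ and truncating gives $\tfrac{d}{dt}M_h(z,\dot z)=\tfrac{d}{dt}(M+\Psi_h)+\mathcal{O}(h^{N})$, which is the claimed identity.

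The main obstacle is this final reduction, together with the combinatorics underlying (c): one has to verify that the recursive elimination of the higher derivatives $z^{(3)},z^{(4)},\dots$ really closes once everything is truncated at $\mathcal{O}(h^{N})$, that it never reintroduces $h$ into the coefficients of the $M_{2i}$ (which holds because all coefficients in \eqref{Me} and all quadrature data $b_i,c_i$ are pure numbers), and --- the conceptual heart --- that the magnetic correction terms $z^{\intercal}S\tilde{B}(z)z^{(2k+1)}$ are precisely the ones that are \emph{not} total time derivatives, in contrast with the inertial terms $z^{\intercal}Sz^{(2k)}$, which telescope by (c), and the force terms, which vanish because $SF\equiv 0$.
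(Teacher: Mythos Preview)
Your approach is essentially the same as the paper's: multiply the modified equation \eqref{Me} by $z^{\intercal}S$, use $S\nabla U\equiv 0$ to kill the force terms, use the identity $z^{\intercal}S(\dot z\times B(z))=-\tfrac{d}{dt}(z^{\intercal}SA(z))$ together with the telescoping of $z^{\intercal}Sz^{(2k)}$ into total derivatives, and read off the remaining magnetic terms as the right-hand side. You are in fact more careful than the paper on one point---the paper leaves the correction terms as polynomials in $z,\dot z,\ddot z,z^{(3)},\ldots$ and never explains how they become functions $M_{2i}(x,v)$ of two arguments only, whereas you explicitly invoke the recursive substitution of \eqref{Me} to eliminate the higher derivatives.
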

\begin{proof}
It is noted that $ z^{\intercal}Sz^{(k)} $ can be written as a total
differential for even values of $ k $, we multiply \eqref{Me} with $
z^{T} S $ yields
\begin{equation}\nonumber
\begin{array}
[c]{ll}
z^{\intercal}S(\ddot{z}-\frac{1}{6}h^{2}z^{(4)}-\dot{z}\times
B(z)+\ldots)
=z^{\intercal}S\tilde{B}(z)(-\frac{h^{2}}{12}z^{(3)}+\frac{h^{4}}{120}z^{(5)}-\ldots)-z^{\intercal}S\sum\limits_{i=1}^{s}\nabla
U(\omega),
\end{array}
\end{equation}
i.e.
\begin{equation}\nonumber
\begin{array}
[c]{ll}
\frac{d}{dt}(z^{\intercal}S\dot{z}-\frac{h^{2}}{6}(z^{\intercal}Sz^{(3)}-\dot{z}^{\intercal}S\ddot{z})+z^{\intercal}SA(z)+
\ldots)
=h^{2}z^{\intercal}S\tilde{B}(z)(-\frac{1}{12}z^{(3)}+\frac{h^{2}}{120}z^{(5)}-\ldots).
\end{array}
\end{equation}
where we used the facts   that $z^{\intercal}S\nabla U(\omega)=0 $
and $z^{\intercal}S(\dot{z}\times
B(z))=-\frac{d}{dt}(z^{\intercal}SA(z))$. This result shows the
statement of this lemma immediately.

\end{proof}

When $ B(x)=B$, it is shown in   \cite{Hairer2017} that the
properties \eqref{M} are satisfied if $ S $ is the skew-symmetric
matrix that embodies the cross product with $ B $, i.e. $ Sv=v\times
B $, and $ U $ is invariant under rotations with the axis $ B $,
i.e. $ \nabla U(x)\times B=0 $ for all $ x $. Under those
conditions, the above result can be improved as follows.

\begin{cor}\label{cor1}
If $ B(x)=B $ is a constant magnetic field and $ \nabla U(x)\times
B=0 $ for all $ x $, then there exist $h$-independent functions $
\widetilde{M}_{2i}(x,v) $, such that the function
$$\widetilde{M}_{h}(x,v)=M(x,v)+h^{2}\widetilde{M}_{2}(x,v)+h^{4}\widetilde{M}_{4}(x,v)+\ldots ,$$
truncated at the $ \mathcal{O}(h^{N}) $ term, satisfies
\begin{equation}\nonumber
\frac{d}{dt}\widetilde{M}_{h}(z,\dot{z})=\mathcal{O}(h^{N})
\end{equation}
along solutions of the modified differential equation \eqref{Me}.
\end{cor}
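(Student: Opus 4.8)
The plan is to follow the same strategy as in Lemma~\ref{theo}, but to exploit the stronger structure available when $B(x)=B$ is constant and $\nabla U(x)\times B=0$. In that case the skew-symmetric matrix $S$ with $Sv=v\times B$ satisfies $\tilde B(x)\equiv \tilde B = -S$ (the constant matrix embodying the cross product with $B$), so the awkward term on the right-hand side of Lemma~\ref{theo}, namely $h^{2}z^{\intercal}S\tilde B(z)\bigl(-\tfrac{1}{12}z^{(3)}+\tfrac{h^{2}}{120}z^{(5)}-\ldots\bigr)$, becomes $h^{2}z^{\intercal}S\tilde B\bigl(-\tfrac{1}{12}z^{(3)}+\tfrac{h^{2}}{120}z^{(5)}-\ldots\bigr)$ with $S\tilde B = -S^{2}$ a \emph{constant symmetric} matrix. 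The key observation is then that each term $z^{\intercal}S\tilde B\,z^{(2k+1)}$ is, up to lower-order total differentials, itself a total time derivative: integrating by parts repeatedly, $z^{\intercal}(S\tilde B)z^{(3)} = \frac{d}{dt}\bigl(z^{\intercal}S\tilde B\,\ddot z - \dot z^{\intercal}S\tilde B\,\dot z\bigr) + \dot z^{\intercal}S\tilde B\,\ddot z$, and since $S\tilde B$ is symmetric, $\dot z^{\intercal}S\tilde B\,\ddot z = \tfrac12\frac{d}{dt}(\dot z^{\intercal}S\tilde B\,\dot z)$, so $z^{\intercal}(S\tilde B)z^{(3)}$ is a full total derivative; similarly for all higher odd derivatives $z^{(2k+1)}$. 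Hence the entire obstruction term can be moved to the left-hand side.

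Concretely, I would start from the identity produced in the proof of Lemma~\ref{theo},
\begin{equation}\nonumber
\frac{d}{dt}M_{h}(z,\dot z) = h^{2}z^{\intercal}S\tilde B\Bigl(-\tfrac{1}{12}z^{(3)}+\tfrac{h^{2}}{120}z^{(5)}-\ldots\Bigr)+\mathcal{O}(h^{N}),
\end{equation}
then write the right-hand side (before the $\mathcal{O}(h^{N})$ remainder) as $\frac{d}{dt}G_{h}(z,\dot z,\ddot z,\ldots)$ for an explicit function $G_{h}=h^{2}G_{2}+h^{4}G_{4}+\ldots$ obtained by the integration-by-parts reductions just described, using only that $S\tilde B$ is constant and symmetric. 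This gives $\frac{d}{dt}\bigl(M_{h}-G_{h}\bigr)=\mathcal{O}(h^{N})$. The functions $G_{2k}$ a priori involve higher derivatives $z^{(j)}$, but along the modified equation~\eqref{Me} every $z^{(j)}$ can be expressed (recursively, by differentiating \eqref{Me}) as an $h$-dependent function of $z$ and $\dot z$ alone; substituting these expressions turns $G_{h}$ into a genuine function of $(z,\dot z)$ of the form $h^{2}(\cdots)+h^{4}(\cdots)+\ldots$. Setting $\widetilde M_{h}(x,v):=M_{h}(x,v)-G_{h}(x,v)$, i.e. $\widetilde M_{2i}=M_{2i}-(\text{the }h^{2i}\text{ part of }G_{h})$, yields the claimed modified momentum with $\frac{d}{dt}\widetilde M_{h}(z,\dot z)=\mathcal{O}(h^{N})$, which is exactly the statement of the corollary.

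The main obstacle I anticipate is bookkeeping rather than conceptual: one must verify that after substituting the recursive expressions for $z^{(j)}$ the function $G_{h}$ genuinely has no $\mathcal{O}(1)$ term (so that $\widetilde M_{h}=M+\mathcal{O}(h^{2})$ with no correction to the leading momentum $M$), and that the formal power series in $h$ can be consistently truncated at order $N$ with the tails absorbed into the $\mathcal{O}(h^{N})$ error — this requires the compactness hypothesis on the numerical solution so that all the derivative expressions and their $h$-expansions are uniformly bounded. A secondary point to handle carefully is the justification that $z^{\intercal}Sz^{(2k)}$ and $z^{\intercal}S\tilde B z^{(2k+1)}$ really are total differentials to all relevant orders; this is the same ``even derivatives integrate to total differentials'' mechanism already invoked in Lemma~\ref{theo} and in \cite{Hairer2017}, now applied also to the odd-derivative terms thanks to the symmetry of $S\tilde B$. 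Once these boundedness and truncation details are in place, the conclusion follows, and combined with $\widetilde M_{h}(x_{n},v_{n})=\widetilde M_{h}(z(nh),\dot z(nh))$ it immediately feeds into the long-time estimate of Theorem~\ref{Thm lm}.
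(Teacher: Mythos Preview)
Your proposal is correct and follows essentially the same route as the paper: start from the identity in Lemma~\ref{theo}, use that for constant $B$ one has $S=\tilde B$ (you wrote $\tilde B=-S$, a harmless sign slip since only the symmetry of $S\tilde B$ is used), and then observe that each $z^{\intercal}S\tilde B\,z^{(2k+1)}=(\tilde Bz)^{\intercal}(\tilde Bz)^{(2k+1)}$ is a total time derivative, so the obstruction term can be absorbed into a corrected $\widetilde M_{h}$. The paper presents exactly this argument (writing the total differentials explicitly), and your additional remarks about reducing higher derivatives to functions of $(z,\dot z)$ via \eqref{Me} and about truncation are more detail than the paper itself provides.
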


\begin{proof}
From the proof of Lemma \ref{theo}, it follows that
\begin{equation}\nonumber
\begin{array}
[c]{ll}
&\frac{d}{dt}(z^{\intercal}S\dot{z}-\frac{h^{2}}{6}(z^{\intercal}Sz^{(3)}-\dot{z}^{\intercal}S\ddot{z})+z^{\intercal}SA(z)+\ldots)\\
&=h^{2}z^{\intercal}S\tilde{B}(z)(-\frac{1}{12}z^{(3)}+\frac{h^{2}}{120}z^{(5)}-\ldots)+\mathcal{O}(h^{N}).
\end{array}
\end{equation}
Since $ Sz=\tilde{B}z $ , then we obtain
\begin{equation}\nonumber
\begin{array}
[c]{lll}
&\frac{d}{dt}(z^{\intercal}S\dot{z}-\frac{h^{2}}{6}(z^{\intercal}Sz^{(3)}-\dot{z}^{\intercal}S\ddot{z})+z^{\intercal}SA(z)+\ldots)\\
&=-h^{2}(\tilde{B}z)^{\intercal}\tilde{B}(-\frac{1}{12}z^{(3)}+\frac{h^{2}}{120}z^{(5)}-\ldots)+\mathcal{O}(h^{N}).
\end{array}
\end{equation}
On the other hand, $ (\tilde{B}z)^{\intercal}(\tilde{B}z^{(k)}) $
can be written as a total differential for odd values of $ k $ as
follows
\begin{equation}\nonumber
\begin{array}
[c]{ll}
&-h^{2}(\tilde{B}z)^{\intercal}\tilde{B}(-\frac{1}{12}z^{(3)}+\frac{h^{2}}{120}z^{(5)}-\ldots)\\
&=-h^{2}\frac{d}{dt}(-\frac{1}{12}((\tilde{B}z)^{\intercal}(\tilde{B}z)^{(2)}-\frac{1}{2}(\tilde{B}\dot{z})^{\intercal}(\tilde{B}\dot{z}))
+\frac{h^{2}}{120}((\tilde{B}z)^{\intercal}(\tilde{B}z)^{(4)}-(\tilde{B}\dot{z})^{\intercal}(\tilde{B}z)^{(3)}+\frac{1}{2}(\tilde{B}\ddot{z})^{\intercal}(\tilde{B}\ddot{z})-\ldots)).
\end{array}
\end{equation}
Hence, the conclusion  is proved.
\end{proof}

By a standard argument given in Chap. IX of \cite{hairer2006},
Theorem \ref{Thm lm} is proved immediately by considering Corollary
\ref{cor1}.

\section{Numerical experiment}\label{sec6}

In this  section, we carry out a  numerical experiment to show the
efficiency of our EP methods. The   methods for comparison  are
chosen as follows:
\begin{itemize}
  \item BORIS: the Boris method presented in \cite{Boris};
  \item EP1: the one-stage EP  method \eqref{EPGL} presented in this paper using the  one-point  Gauss-Legendre's
  quadrature;
  \item EP2: the two-stage EP method \eqref{EPGL} presented in this paper using the two-point  Gauss-Legendre's
   quadrature;
  \item EP3: the three-stage EP method \eqref{EPGL} presented in this paper using the three-point  Gauss-Legendre's
    quadrature.
\end{itemize}

For the charged-particle dynamics \eqref{charged-particle}, we
consider potential $$U(x)=\frac{1}{100\sqrt{x_{1}^2+x_{2}^2}},$$ and
the filed $$B(x)=(0,0,\sqrt{x_{1}^{2}+x_{2}^{2}})^{\intercal}.$$ The
initial values are chosen as $x(0)=(0.0,1.0,0.1)^{\intercal}$ and
$v(0)=(0.09,0.05,0.20)^{\intercal}$. We solve the problem in the
interval $ [0,T] $ with different stepsizes $h=\frac{1}{2^{i}}$ for
$ i=6,7,8,9 $. The global errors are presented in Figure
\ref{fig:err} for $T=10,100,1000.$  We then integrate this problem
with the stepsizes $ h=0.05 $ and $ h=0.1 $ in the integral
[0,10000]. See Figure \ref{fig:h} for the energy conservation for
different methods. Besides the energy we also consider the momentum
$$ M(x,v)=(v_{1}+A_{1}(x))x_{2}-(v_{2}+A_{2}(x))x_{1}.$$ Its errors
are presented in Figure \ref{fig:M}.

From the results, it can be clearly observed that our methods
provide a better numerical solution than Boris method and preserve
the energy and the momentum well. Moreover, the energy conservation
is much better than the Boris method and the momentum conservation
is unchanged no matter which Gauss-Legendre's
  quadrature is used. These observed long time conservations support the theoretical results given in Theorems \ref{thm ep1}, \ref{thm ep2} and \ref{Thm lm}.

\begin{figure}[ptbh]%
\centering\tabcolsep=1mm
\begin{tabular}
[c]{cccc}%
\includegraphics[width=4.3cm,height=6cm]{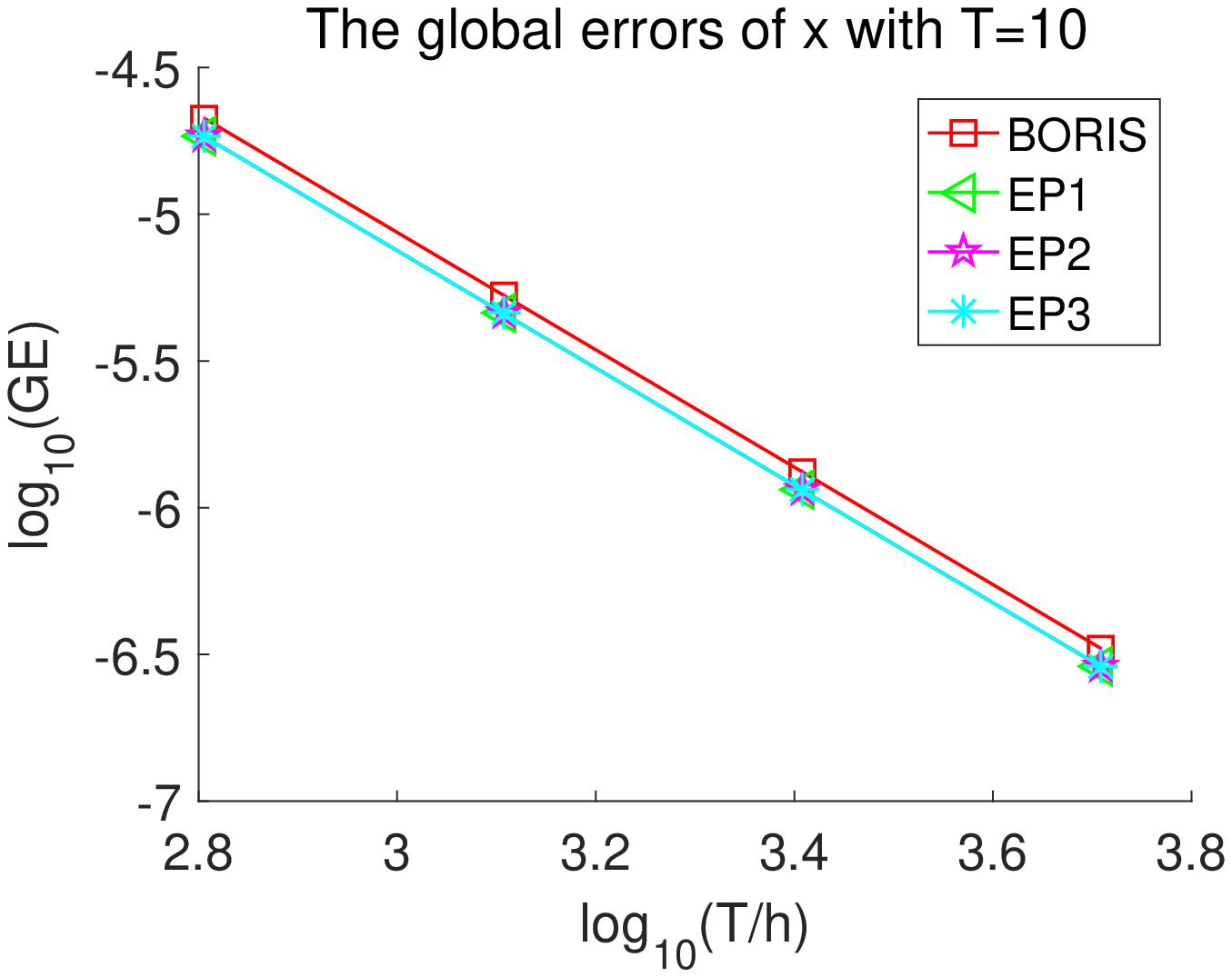} & \includegraphics[width=4.3cm,height=6cm]{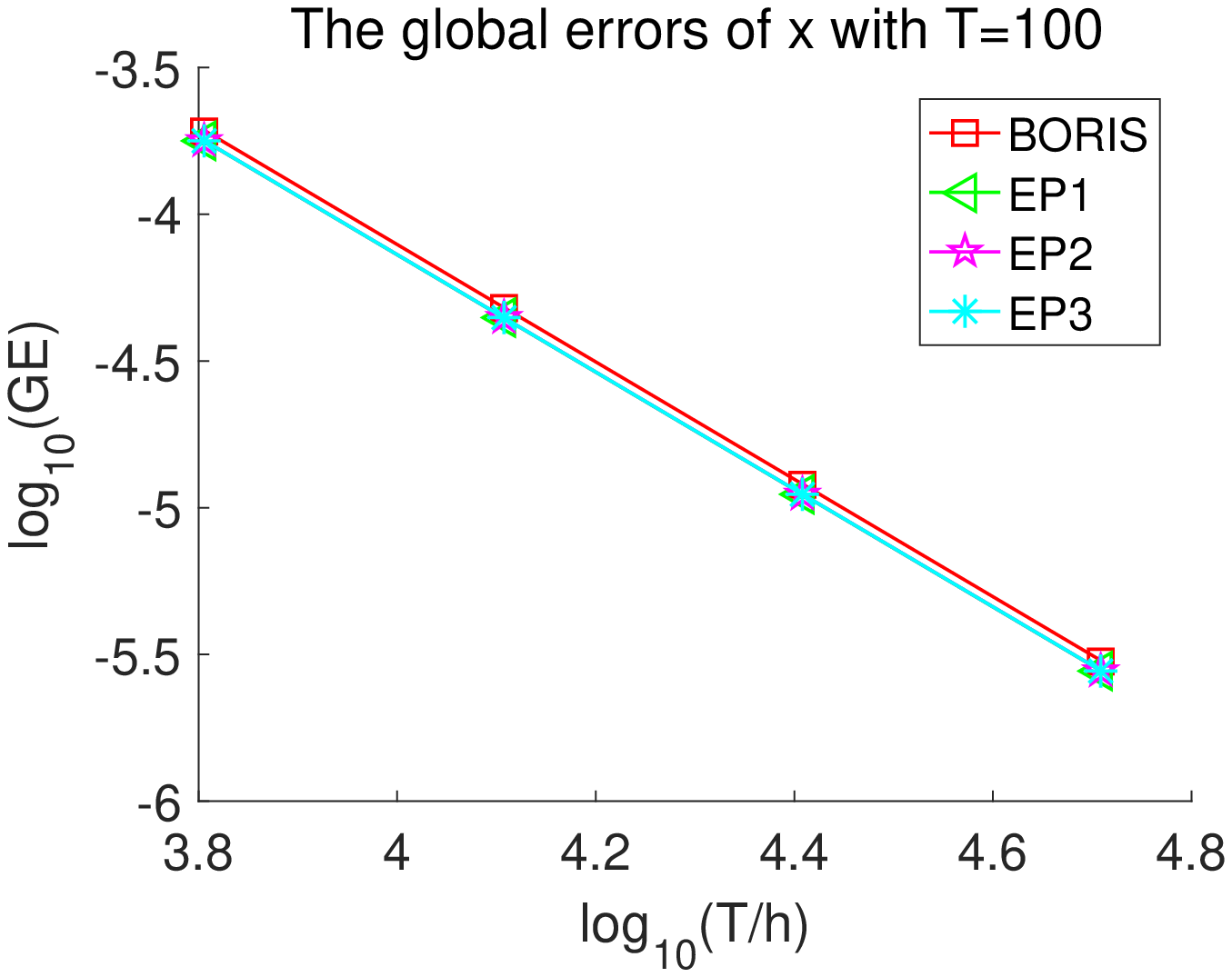} &\includegraphics[width=4.3cm,height=6cm]{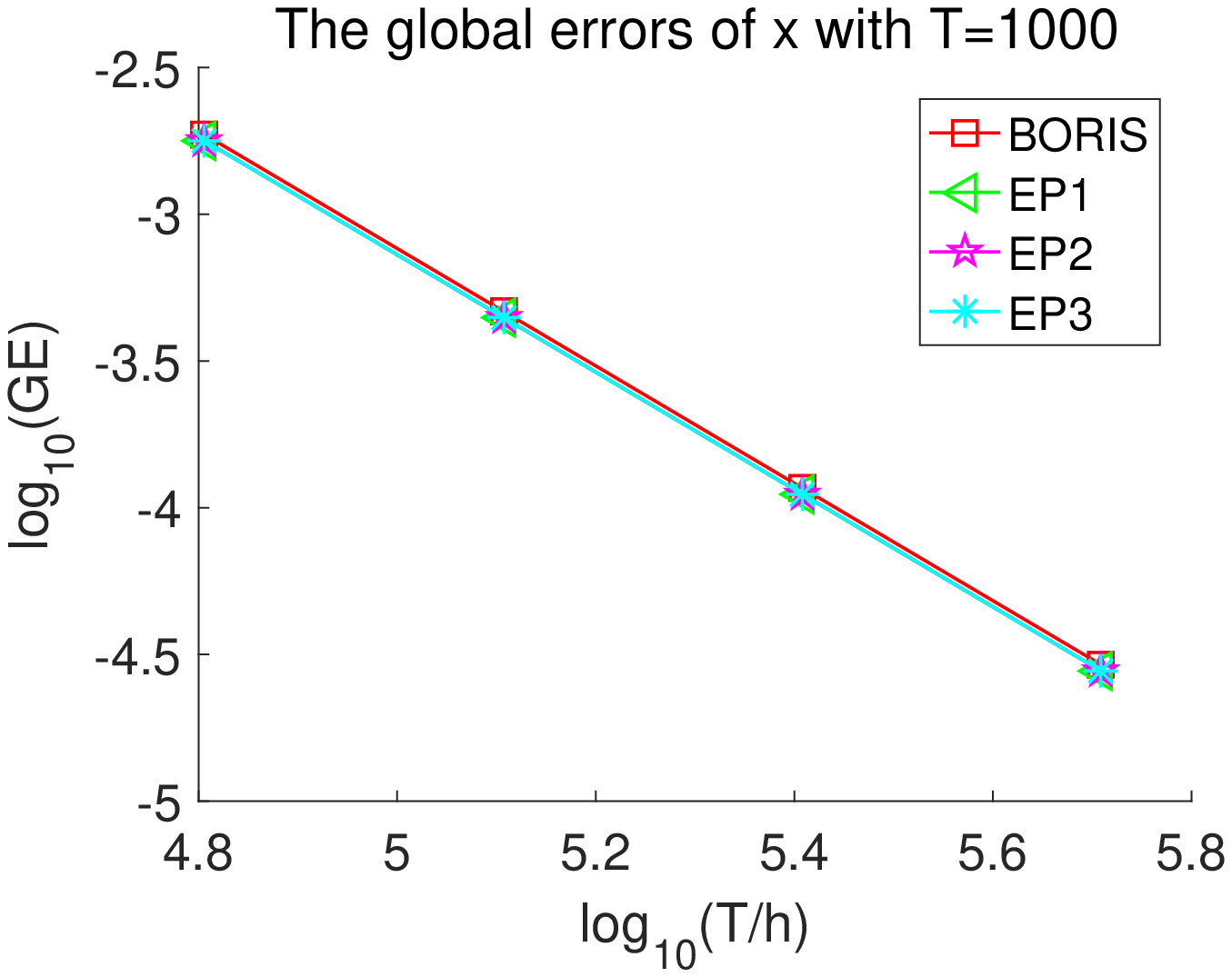}\\
\includegraphics[width=4.3cm,height=6cm]{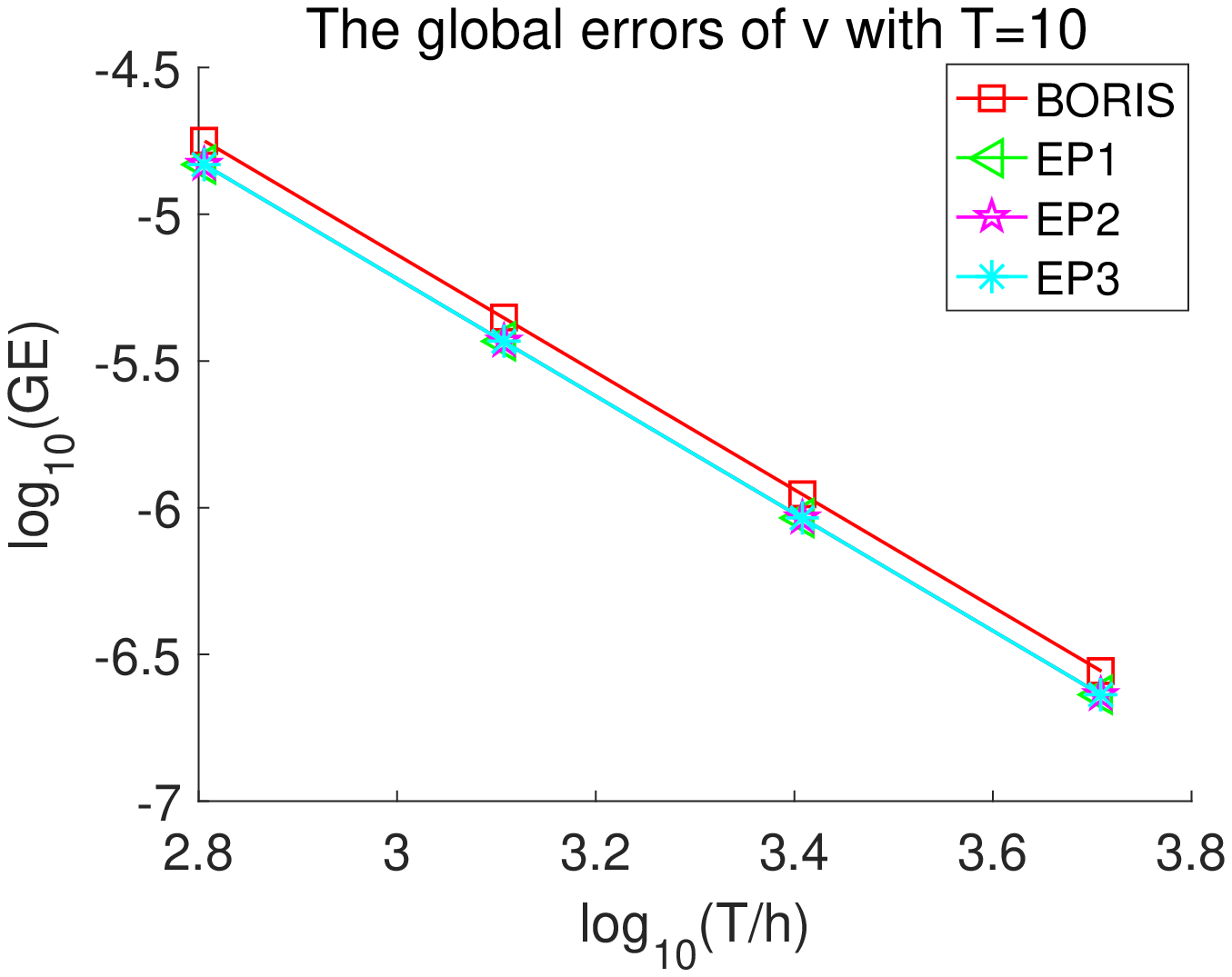} & \includegraphics[width=4.3cm,height=6cm]{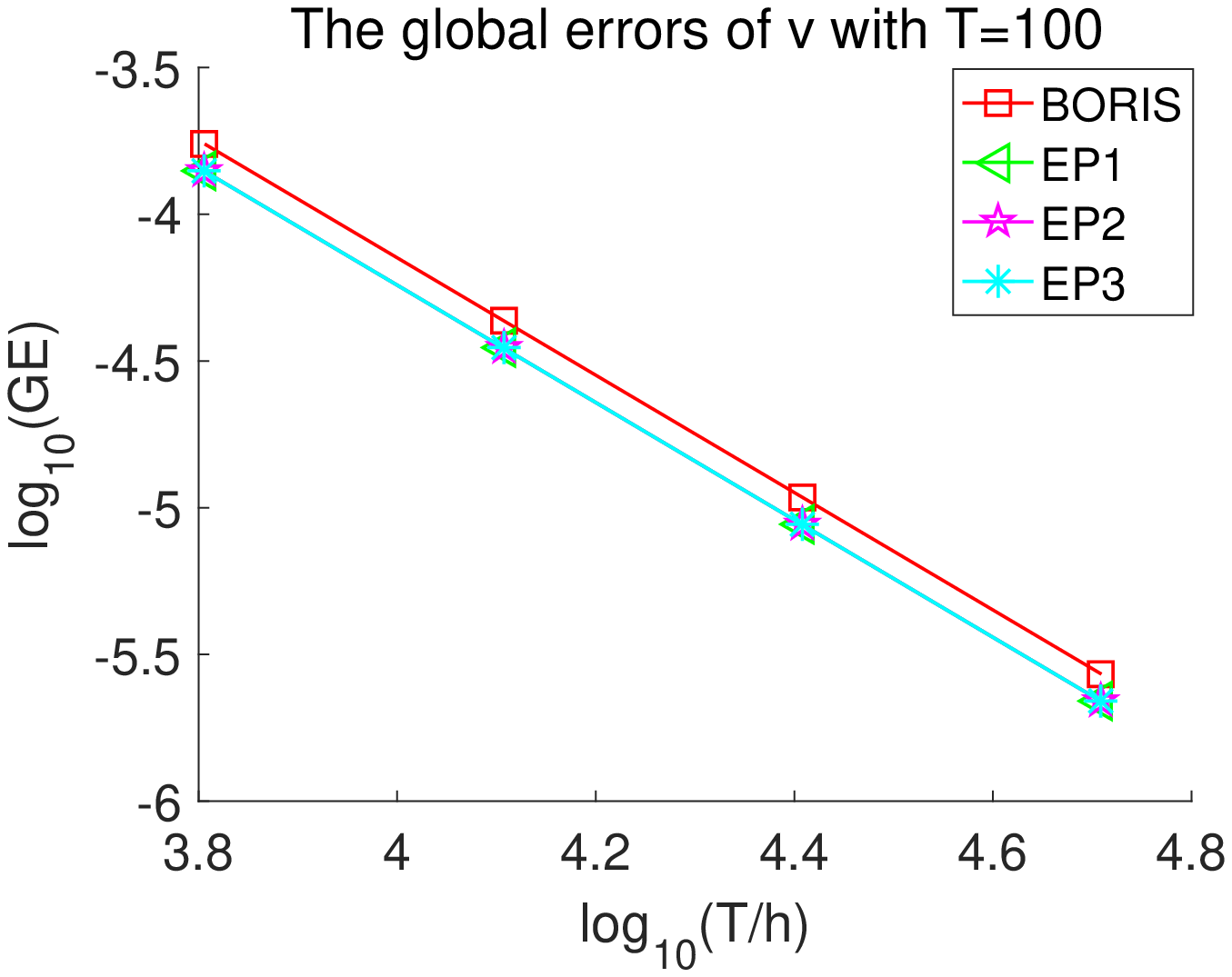} &\includegraphics[width=4.3cm,height=6cm]{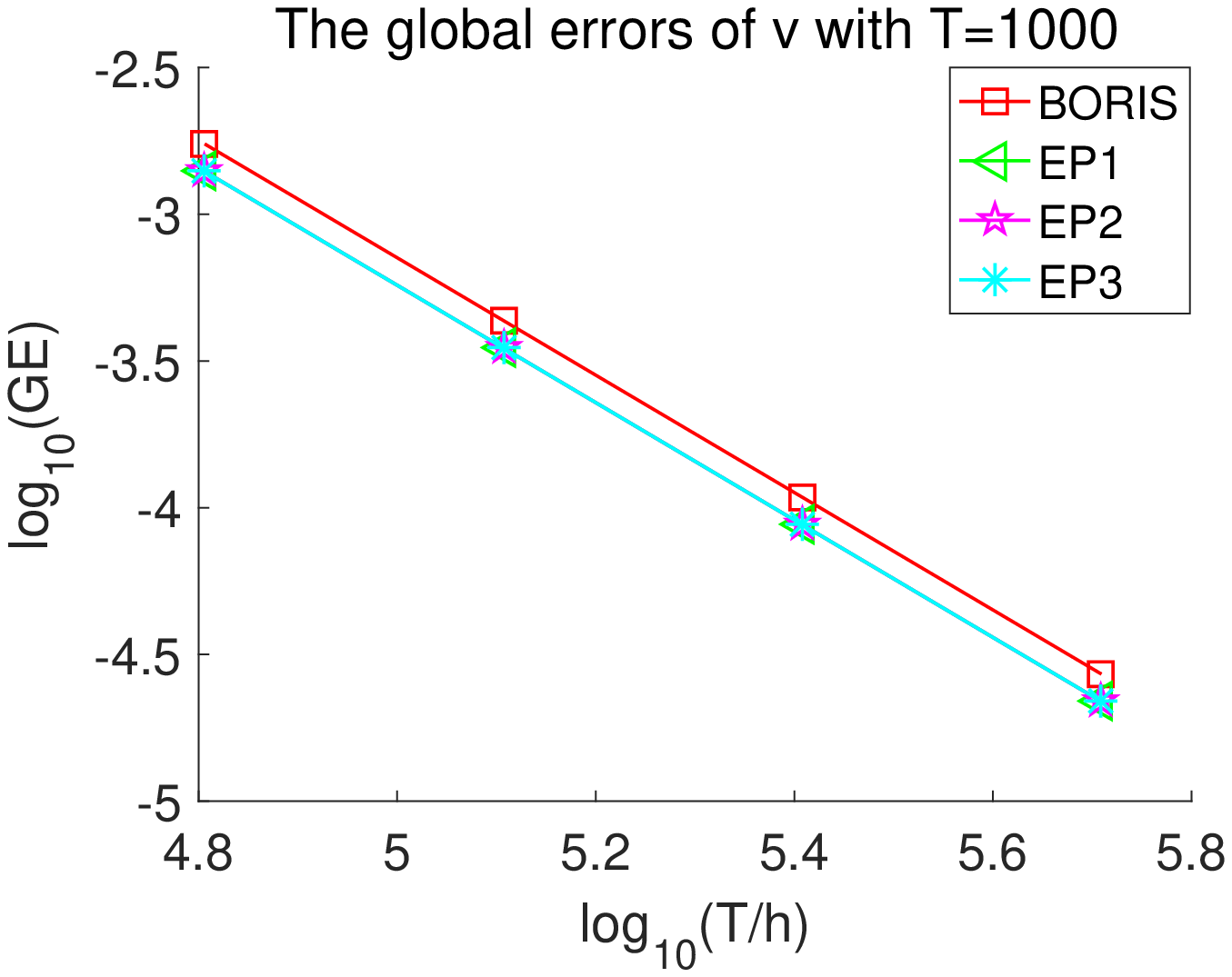} \\ \
\end{tabular}
\caption{The logarithm of the global error against the logarithm of T/h.}%
\label{fig:err}
\end{figure}

\begin{figure}[ptbh]%
\centering\tabcolsep=1mm
\begin{tabular}
[c]{cccc}%
&\includegraphics[width=12cm,height=4cm]{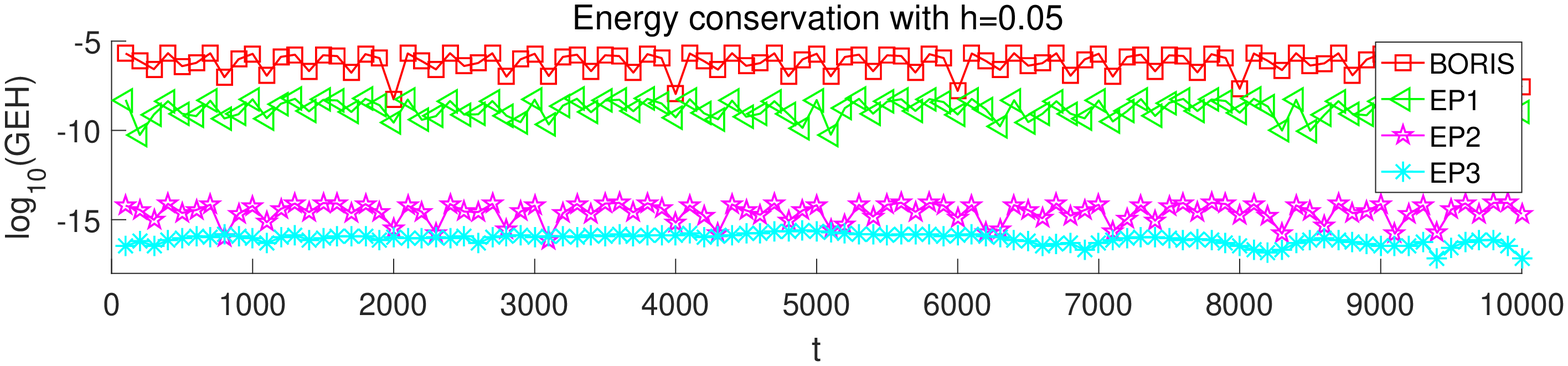} \\
 &\includegraphics[width=12cm,height=4cm]{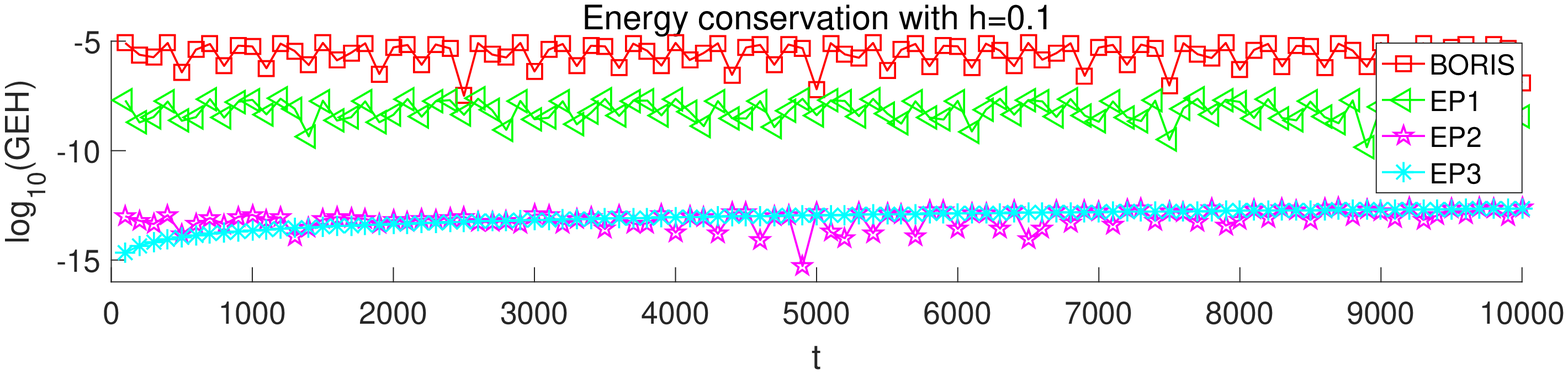}
\end{tabular}
\caption{The logarithm of the error of energy against t.}%
\label{fig:h}
\end{figure}

\begin{figure}[ptbh]%
\centering\tabcolsep=1mm
\begin{tabular}
[c]{cccc}%
&\includegraphics[width=12cm,height=4cm]{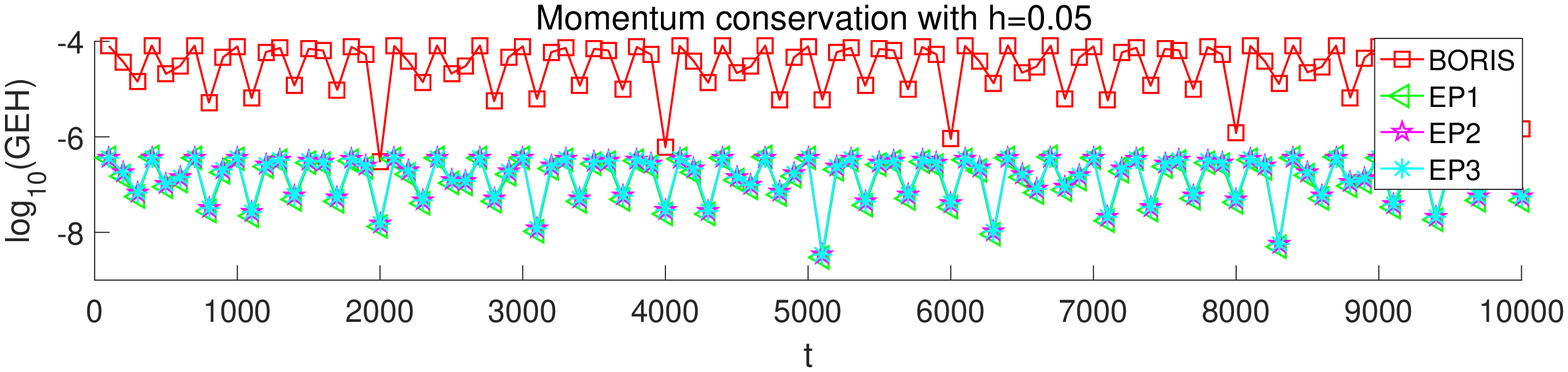} \\
 &\includegraphics[width=12cm,height=4cm]{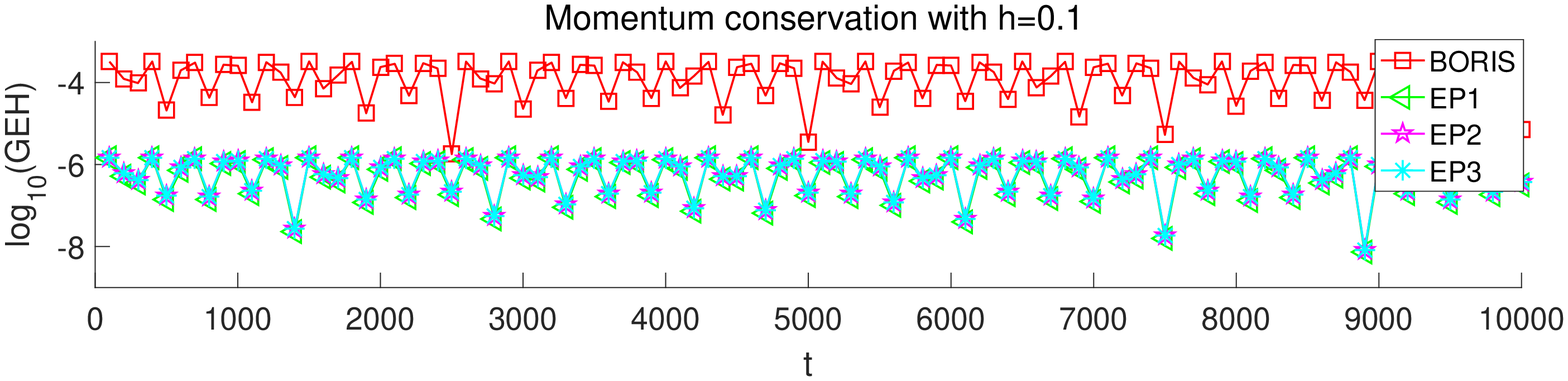}
\end{tabular}
\caption{The logarithm of the error of momentum against t.}%
\label{fig:M}
\end{figure}

\section{Conclusion}\label{sec7}
In this paper,  the energy-preserving methods for solving
charged-particle dynamics \eqref{charged-particle} were presented
and studied. We analyzed and discussed its algebraic order and
symmetry. Moreover, it was shown that our method can exactly
preserve the energy of the charged-particle dynamics. We also proved
that the momentum is nearly conserved along the novel methods over
long times. A numerical experiment was performed and it was shown
that our method is more effective and it can preserve the energy and
momentum better than the Boris method.


\begin{thebibliography}{}


\bibitem {Arnold97} V. I. Arnold, V. V. Kozlov, A. I. Neishtadt, Mathematical
Aspects of Classical and Celestial Mechanics, Springer, Berlin, 1997


\bibitem {Boris} J. P. Boris, Relativistic plasma simulation-optimization of a hybird code, Proceeding of Fourth Conference on Numerical Simulations of Plasmas, pages 3-67. November 1970


\bibitem {Betsch2000} H. Betsch, P. Steinmann, Inherently energy conserving time finite elements for classical mechanics, J. Comput. Phys., 160 (2000) 88-116.



    \bibitem{L. Brugnano} L. Brugnano, F. Iavernaro, D. Trigiante, Hamiltonan Boundary Value Methods (Energy Preserving Discrete Line Integral Methods), J. Numer. Anal. Ind. Appl. Math. 5 (2010) 13-17.

\bibitem{L. Brugnano2012} L. Brugnano, F. Iavernaro, D. Trigiante, Energy- and quadratic invariants-preserving integrators based upon Gauss-Collocation formulae, SIAM J. Numer. Anal., 50 (2012) 2897-2916.




\bibitem {Cary2009}  J. R. Cary, A. J. Brizard, Hamiltonian theory of guiding-center
motion, Rev. Moder. Phys. 81  (2009) 693-738

\bibitem {Cohen2011} D. Cohen, E. Hairer, Linear energy-preserving integrators for Possion systems, BIT, 51(2011)91-101.



\bibitem {Celloni2014} E. Celledoni, B. Owren, Y. Sun, The minimal stage, energy preserving Runge-Kutta method for polynomial Hamiltonian systems is the averaged vector field method, Math. Comp., 83 (2014) 1689-1700.


\bibitem {Chen2001} J. B. Chen, M. Z. Qin, Multisympletic Fourier pseudospectral method for the nonlinear $Schr\ddot{o}dinger$ equation, Electron. Trans. Numer. Anal., 12 (2001) 193-204.


\bibitem{Ellison2015} C. L. Ellison, J. W. Burby, and H. Qin, Comment on "Symplectic integration of mag-netic systems": A proof that the Boris algorithm is not variational. J. Comput. Phys., 301: 489C493, 2015.


\bibitem{E.Faou2004} E.Faou, E.Hairer, T.-L. Pham, Energy conservation with non-symplectic methods: examples and counter-examples, BIT 44 (2004) 699-709.

\bibitem{Gonzalez} Gonzalez, O.: Time integration and discrete Hamiltonian systems. J. Nonlinear Sci. 6, 449-467(1996).


\bibitem{E. Hairer}  E. Hairer, Energy-preserving variant of collocation methods, J. Numer. Anal. Ind. Appl. Math. 5(2010) 73-84.

\bibitem {Hairer2017} E. Hairer, C. Lubich, Energy behaviour of the Boris method for charged-particle dynamics. To appear in BIT,
(2018)

\bibitem {Lubich2017} E. Hairer, C. Lubich, Symmetric multistep methods for charged-particle dynamics, SMAI J. Comput. Math. 3 (2017) 205-218

\bibitem {hairer2006} E. Hairer, C. Lubich, G. Wanner,   Geometric Numerical
Integration: Structure-Preserving Algorithms for Ordinary
Differential Equations, 2nd edn.  Springer-Verlag, Berlin,
Heidelberg, 2006.



\bibitem {He2015} Y. He, Y. Sun, J. Liu, H. Qin, Volume-preserving algorithms for charged particle dynamics, J. Comput. Phys. 281 (2015) 135-147

\bibitem {Wu2016} Y. W. Li, X. Wu, Functionally fitted energy-preserving methods for solving oscillatory nonlinear Hamiltonian systems, SIAM J. Numer.
Anal., 54 (2016) 2036-2059.

\bibitem {W2016} Y. W. Li, X. Wu, Exponential integrators preserving first integrals or Lyapunov functions for conservative or dissipative systems, SIAM J. Sci. Comput. 38 (2016) 1876-1895.

\bibitem{wang2018-new2} T. Li, B. Wang, Explicit symplectic exponential integrators for charged-particle dynamics in a  strong and constant magnetic
  field,  (2018) arXiv:1810.06038


\bibitem {Miyatake2014} Y. Miyatake, An energy-preserving exponentially-fitted continuous stage Runge-Kutta method for Hamiltonian systems, BIT,
54 (2014) 777-799

\bibitem {Miyatake2015} Y. Miyatake, A derivation of energy-preserving exponentially-fitted integrators for poisson systems, Comput. Phys. Comm., 187 (2015) 156-161.



\bibitem{McLachlan2014}  R. I. McLachlan, G. R. W. Quispel, Discrete gradient methods have an energy conservation law, Disc. Contin. Dyn, Syst.,
34 (2014) 1099-1104.

\bibitem{McLachlan1999} R. I. McLachlan, G. R. W. Quispel, N.Robidoux, Geometric integration using discrete gradients, Philos. Trans. R. Soc. Lond.
A 357 (1999) 1021-1045.

\bibitem {Quispel2008} G. R. W. Quispel, D. I. McLaren, A new class of energy-preserving numerical integration methods, J. Phys. A, 41  (045206)
(2008)7pp.



\bibitem {Qin2013} H. Qin, S. Zhang, J. Xiao, J. Liu, Y. Sun, W. M. Tang, Why is
Boris algorithm so good? Physics of Plasmas, 20  (2013) 084503



\bibitem {Tao2016} M. Tao, Explicit high-order symplectic integrators for charged particles in general electromagnetic fileds, J. Comput. Phys. 327 (2016) 245-251

\bibitem{wang-2016}  B. Wang, A. Iserles, X. Wu, Arbitrary-order trigonometric  Fourier collocation methods for
multi-frequency oscillatory  systems,  Found. Comput. Math.  16
(2016) 151-181


\bibitem {wang2012-1} B. Wang, X.  Wu, A new high precision
energy-preserving integrator for system of oscillatory second-order
differential equations, Phys. Lett. A,    376 (2012) 1185-1190.

\bibitem {wang} B. Wang, X.  Wu, The formulation and analysis of energy-preserving schemes for solving high-dimensional nonlinear Klein-Gordon
equations, IMA J. Numer. Anal., in press, doi£»10.
1093/imanum/dry047.

\bibitem{wu2017-JCAM} B. Wang,  X. Wu,  F. Meng, Trigonometric collocation methods based
on Lagrange basis polynomials for multi-frequency oscillatory
second-order differential equations,
 J. Comput. Appl. Math. 313  (2017) 185-201



\bibitem {Wu2018} X. Wu, B. Wang, Recent Developments in Structure-Preserving Algorithms for Oscillatory Differential Equations, Springer Nature Singapore Pte Ltd, 2018.




\bibitem {Zhang2016} R. Zhang, H. Qin, Y. Tang, J. Liu, Y. He, J. Xiao, Explicit symplectic algorithms based on generating functions for charged particle dynamics, Phys. Revi. E 94 (2016) 013205


















\end{thebibliography}
\end{document}